\newtheorem{lemma}{Lemma}[section]
\newtheorem{theorem}{Theorem}[section]
\newtheorem{proposition}[lemma]{Proposition}
\newtheorem{corollary}[lemma]{Corollary}
\theoremstyle{definition}
\theoremstyle{definition}
\newtheorem{remark}[lemma]{Remark}
\theoremstyle{definition}
\global\let\AddToReset=\@addtoreset}
\newcommand{\A}{\mathfrak{H}}
\newcommand{\D}{\mathrm{d}}
\newcommand{\Q}{\mathcal{E}_{\rm pot}}
\newcommand{\s}{\mathcal{S}}
\newcommand{\R}{\mathbb{R}}
\newcommand{\N}{\mathbb{N}}
\newcommand{\EH}{\mathcal E_H}
\newcommand{\FT}{\mathcal F_T}
\newcommand{\e}{{\varepsilon }}
\newcommand{\norm}[1]{\| #1 \|}
\def\tr{\mathop{\rm tr}\nolimits}
\begin{document}

\title[Thermal effects in gravitational Hartree systems]
{Thermal effects in gravitational Hartree systems}

\author[G.L.~Aki] {Gonca L.~Aki}

\author[J.~Dolbeault] {Jean Dolbeault}

\author[C.~Sparber]{Christof Sparber}

\address[G.L.~Aki]
{Faculty of Mathematics, University of Vienna, Nordbergstra\ss e 15, A-1090 Vienna, Austria}\email{gonca.aki@univie.ac.at}

\address[J.~Dolbeault]
{Ceremade (UMR CNRS no.~7534), Universit\'e Paris-Dauphine, Place de Lattre de Tassigny, F-75775 Paris C\'edex 16, France}\email{dolbeaul@ceremade.dauphine.fr}

\address[C.~Sparber]
{Department of Mathematics, Statistics, and Computer Science, M/C 249, University of Illinois at Chicago, 851 S. Morgan Street, Chicago, IL 60607, USA}\email{sparber@math.uic.edu}

\date{\today}

\subjclass[2010]{35Q40, 47G20, 49J40, 82B10, 85A15}

%
%
%
%

\keywords{Gravitation, Hartree energy, entropy, ground states, free energy, Casimir functional, pure states, mixed states}

\thanks{This publication has been supported by Award No.~KUK-I1-007-43 of the King Abdullah University of Science and Technology (KAUST). J.~Dolbeault and C.~Sparber have been supported, respectively, by the ANR-08-BLAN-0333-01 project CBDif-Fr and by the University research fellowship of the Royal Society. G.L.~Aki acknowledges the support of the FWF, grant no.~W 800-N05 and funding by WWTF project (MA45).}

\begin{abstract}
We consider the non-relativistic Hartree model in the gravitational case, i.e.~with attractive Coulomb-Newton interaction. For a given mass $M>0$, we construct stationary states with non-zero temperature $T$ by minimizing the corresponding free energy functional. It is proved that minimizers exist if and only if the temperature of the system is below a certain threshold $T^*>0$ (possibly infinite), which itself depends on the specific choice of the entropy functional. 
We also investigate whether the corresponding minimizers are mixed or pure quantum states and characterize a critical temperature $T_c \in (0, T^*)$ above which mixed states appear.
\end{abstract}

\maketitle
\thispagestyle{empty}

\section{Introduction}\label{sint}


In this paper we investigate the \emph{non-relativistic gravitational Hartree system with temperature}. This model can be seen as a mean-field description of a system of self-gravitating quantum particles. It is used in astrophysics to describe 
so-called \emph{Boson stars}. In the present work, we are particularly interested in \emph{thermal effects}, i.e.~(qualitative) differences to the zero temperature case.

A physical state of the system will be represented by a density matrix operator $\rho\in\mathfrak S_1(L^2(\R^3))$, i.e.~a positive self-adjoint trace class operator acting on $L^2(\R^3;\mathbb C)$. 
Such an operator $\rho$ can be decomposed as
\begin{equation}\label{decom}
\rho =\sum_{j\in\N}\lambda_j\,|\psi_j\rangle\langle \psi_j|
\end{equation}
with an associated sequence of eigenvalues $(\lambda_j)_{j\in\N}\in\ell^1$, $\lambda_ j \ge 0$, usually called \emph{occupation numbers}, and a corresponding sequence of eigenfunction $(\psi_j)_{j\in\N}$, 
forming a complete orthonormal basis of $L^2(\R^3)$, cf.~\cite{Si}. By evaluating the kernel $\rho(x,y)$ on its diagonal, we obtain the corresponding particle density
\[
n_\rho(x)=\sum_{j\in\N}\lambda_j\,|\psi_j(x)|^2\in L_+^1(\R^3)\;.
\]
In the following we shall assume that
\begin{equation}\label{mass}
\int_{\R^3} n_\rho(x) \, \D x = M\;,
\end{equation}
for a given total mass $M>0$. We assume that the particles interact solely via gravitational forces. The corresponding \emph{Hartree energy} of the system is then given by
\begin{align*}
\EH[\rho] : = \mathcal E_{\rm kin}[\rho] - \mathcal E_{\rm pot}[\rho]= \tr(-\Delta\,\rho)- \frac{1}{2} \tr(V_\rho\,\rho)\;,
\end{align*}
where $V_\rho$ denotes the \emph{self-consistent potential}
\[
V_\rho = n_\rho \ast \frac{1}{ |\, \cdot \, |}
\]
and $`\ast$' is the usual convolution w.r.t.~$x\in\R^3$. Using the decomposition \eqref{decom} for $\rho$, the Hartree energy can be rewritten as
\[
\EH[\rho] = \sum_{j\in\N}\lambda_j\int_{\R^3}|\nabla \psi_j(x)|^2\,\D x-\frac{1}{2}\iint_{\R^3\times\R^3}\frac{n_\rho(x)\,n_\rho(y)}{|x-y|}\;\D x\,\D y\;.
\]
To take into account thermal effects, we consider the associated \emph{free energy functional} 
\begin{equation}\label{freeenergy}
\FT[\rho]
:=\EH[\rho]-T\,\s[\rho]
\end{equation}
where $T\ge 0$ denotes the temperature and $\mathcal S[\rho]$ is the \emph{entropy functional}
\[
\s[\rho]:=-\tr\beta(\rho)\;.
\]
The \emph{entropy generating function} $\beta$ is assumed to be convex, of class $C^1$ and will satisfy some additional properties to be prescribed later on. 
The purpose of this paper is to investigate the existence of \emph{minimizers} for $\FT$ with fixed mass $M>0$ and temperature $T\ge 0$ and study their qualitative properties. 
These minimizers, often called \emph{ground states}, can be interpreted as stationary states for the time-dependent system
\begin{equation}\label{equationrho}
i\,\frac{\D}{\D t} \rho(t) = \, [H_{\rho(t)}, \rho(t)]\;, \quad \rho(0)=\rho_{\rm in}\;.
\end{equation}
Here $[A, B]=A\,B-B\,A$ denotes the usual commutator and $H_\rho$ is the mean-field \emph{Hamiltonian operator}
\begin{equation}\label{Hamil}
H_{\rho}:=-\Delta- n_{\rho} \ast \frac{1}{|\cdot|}\;.
\end{equation}
Using again the decomposition \eqref{decom}, this can equivalently be rewritten as a system of (at most) countably many Schr\"odinger equations coupled through the mean field potential $V_\rho$:
\begin{equation}\label{equationpsi}
\left\{\begin{array}{l}
i\,\partial_t\psi_j +\Delta\,\psi_j +V(t,x)\,\psi_j = 0\;,\quad j\in\N\;,\\[6pt]
- \Delta V_\rho\,= \, 4 \pi \sum_ {j\in\N}\lambda_j\,|\psi_j(t,x)|^2\,.
\end{array}\right.
\end{equation}
This system is a generalization of the gravitational Hartree equation (also known as the \emph{Schr\"odinger-Newton model}, see \cite{StVu}) to the case of mixed states. 
Notice that it reduces to a finite system as soon as only a finite number of $\lambda_j$ are non-zero. In such a case, $\rho$ is a finite rank operator.

\smallskip Establishing the existence of stationary solutions to nonlinear Schr\"odinger models by means of variational methods is a classical idea, cf.~for instance~\cite{Lieb-Loss}. A particular advantage of such an approach is that in most cases one can directly deduce \emph{orbital stability} of the stationary solution w.r.t.~the dynamics of \eqref{equationrho} or, equivalently, \eqref{equationpsi}. In the case of \emph{repulsive} self-consistent interactions, describing e.g.~electrons, this has been successfully carried out in \cite{DoFeLe, DoFeLoPa, DoFeMa, MaReWo}. In addition, existence of stationary solutions in the repulsive case has been obtained in \cite{Ma, Ni1, Ni2, Ni3} using convexity properties of the corresponding energy functional.

In sharp contrast to the repulsive case, the gravitational Hartree system of stellar dynamics, does \emph{not} admit a convex energy and thus a more detailed study of minimizing sequences is required. To this end, we first note that at zero temperature, i.e.~$T=0$, the free energy $\FT[\rho]$ reduces to the gravitational Hartree energy $\EH[\rho]$. For this model, existence of the corresponding zero temperature ground states has been studied in \cite{Lie, Li2, Li3} and, more recently, in \cite{StVu}. 
Most of these works rely on the so-called \emph{concentration-compactness method} introduced by Lions in \cite{Li}. According to \cite{Lie}, it is known that for $T=0$ the minimum of the Hartree energy is uniquely achieved by an appropriately normalized \emph{pure state}, i.e.~a rank one density matrix $\rho_0 = M\,|\psi_0\rangle\langle \psi_0|$. The concentration-compactness method has later been adapted to the setting of density matrices, see for instance \cite{LeLe} for a recent paper written this framework, in which the authors study a \emph{semi-relativistic} model of Hartree-Fock type at zero temperature.

\begin{remark} In the classical kinetic theory of self-gravitating systems, a variational approach based on the so-called \emph{Casimir functionals} has been repeatedly used to prove existence and orbital stability of stationary states of relativistic and non-relativistic Vlasov-Poisson models: see for instance~\cite {Wa1, Wa2, Wo, Ge1, Ge2, Sc, DoSaSo, Ge3, SaSo}. These functionals can be regarded as the classical counterpart of $\FT[\rho]$ and such an analogy between classical and quantum mechanics has already been used in \cite{MaReWo, DoFeLoPa, DoFeMa, DoFeLe}.
\end{remark} 

In view of the quoted results, the purpose of this paper can be summarized as follows: 
First, we shall prove the existence of minimizers for $\FT$, extending the results of \cite{Lie, Li2, Li3, StVu} to the case of non-zero temperature. As we shall see, a \emph{threshold in temperature} arises due to 
the competition between the Hartree energy and the entropy term and we find that minimizers of $\FT$ exist only \emph{below a certain maximal temperature} $T^*>0$, 
which depends on the specific form of the entropy generating function $\beta$. One should note that, by using the scaling properties of the system, the notion of a maximal temperature for a given mass $M$ can be rephrased into a corresponding threshold for the mass at a given, fixed temperature $T$. Such a critical mass, however, has to be clearly distinguished from the well-known \emph{Chandrasekhar mass} threshold in semi-relativistic models, cf.~\cite{LiYau, Len, LeLe}. Moreover, depending on the choice of $\beta$, it could happen that $T^*= +\infty$, in which case minimizers of $\FT$ would exist even if the temperature is taken arbitrarily large. In a second step, we shall also 
study the qualitative properties of the ground states with respect to the temperature $T\in [0, T^*)$. In particular, we will prove that there exists a certain \emph{critical temperature} $T_c >0$, 
above which minimizers correspond to \emph{mixed quantum states}, i.e.~density matrix operators with rank higher than one. If $T<T_c$, minimizers are pure states, as in the zero temperature model.

\medskip In order to make these statements mathematically 
precise, we introduce 
\[
\A:=\Big\{\rho: L^2(\R^3)\rightarrow L^2(\R^3)\ : \ \rho\ge 0\,,~\rho\in\mathfrak S_1\,,~\sqrt{-\Delta}\,\rho\,\sqrt{-\Delta}\in\mathfrak S_1\Big\}
\]
and consider the norm
\[
\norm{\rho}_{\A}:=\tr\rho+\tr\big(\sqrt{-\Delta}\,\rho\,\sqrt{-\Delta}\big)\;.
\]
The set $\A$ can be interpreted as the cone of nonnegative density matrix operators with finite energy. Using the decomposition \eqref{decom}, if $\rho\in\A$, we obtain that $\psi_j\in H^1(\R^3)$ for all $j\in\N$ such that $\lambda_j>0$. Taking into account the mass constraint \eqref{mass} we define the set of physical states by
\[
\begin{array}{ll}
\A_M:=\{\rho\in\A \ : \ \tr\rho=M\}\;.
\end{array}
\]
We denote the infimum of the free energy functional $\FT$, defined in \eqref{freeenergy}, by
\begin{equation}\label{infimum}
i_{M,T}=\inf_{\rho\in\A_M}\FT[\rho]\;.
\end{equation}
The set of minimizers will be denoted by $\mathfrak M_M \subset \A_M$.
As we shall see in the next section, $i_{M,T} <0$ if $\mathfrak M_M\neq\emptyset$. This however can be guaranteed only below a certain maximal temperature $T^* = T^*(M)$ given by
\begin{equation}\label{Tstar}
T^*(M):=\sup\{T>0~:~i_{M,T}<0\}\;.
\end{equation}
This maximal temperature $T^*$ will depend on the choice of the entropy generating function $\beta$ for which we impose the following assumptions:
\begin{itemize}
\item[($\beta$1)] ${\beta}$ is strictly convex and of class $C^1$ on $[0,\infty)$,\vspace*{6pt}
\item[($\beta$2)] ${\beta}\ge 0$ on $[0,1]$ and $\beta(0)=\beta'(0)=0$,\vspace*{6pt}
\item[($\beta$3)] $\sup_{m\in(0,\infty)}\frac{m\,\beta'(m)}{\beta(m)}\le3$.
\end{itemize}
A typical example for the function $\beta$ reads
\[\label{betaexam}
\beta(s)= s^p\,, \quad p\in(1,3]\;.
\]
Such a power law nonlinearity is of common use in the classical kinetic theory of self-gravitating systems known as \emph{polytropic gases}. One of the main features of such models is to give rise to orbitally stable stationary states with 
\emph{compact support}, cf.~\cite{GuRe, Ge2, Ge3, Wa1, Wa2, Wo}, clearly a desirable feature when modeling stars. 
We shall prove in Section 6, that $T^*$ is \emph{finite} if $p$ is not too large. The limiting case as $p$ approaches $1$ corresponds to $\beta(s) = s\,\ln s$ 
but in that case the free energy functional is \emph{not} bounded from below, see \cite{MR992653} for a discussion in the Coulomb repulsive case, which can easily be adapted to our setting.

Up to now, we have made no distinction between \emph{pure states}, corresponding density matrix operators with rank one, and \emph{mixed states}, corresponding to operators with finite or infinite rank. 
In~\cite{Lie} Lieb has proved that for $T=0$ minimizers are pure states. As we shall see, this is also the case when~$T$ is positive but small and as a consequence we have: $i_{M,T}=i_{M,0}+T\,\beta(M)$. 
Let us define
\begin{equation}\label{criticalT}
T_c(M):=\max\big\{T>0\;: \;i_{M,T}=i_{M,0}+\tau\,\beta(M)\;\forall\;\tau\in(0,T]\,\big\}\;.
\end{equation}
With these definitions in hand, we are now in the position to state our main result.
\begin{theorem} \label{theo} Let $M>0$ and assume that ($\beta$1)--($\beta$3) hold. Then, the maximal temperature~$T^*$ defined in \eqref{Tstar} is positive, possibly infinite, and the following properties hold:
\begin{itemize}
\item[(i)] For all $T< T^*$, there exists a density operator $\rho\in\A_M$ such that $\FT[\rho]=i_{M,T}$. Moreover $\rho$ solves the self-consistent equation
\[
\rho=(\beta')^{-1}\big((\mu -H_\rho)/T\big)
\]
where $H_\rho$ is the mean-field Hamiltonian defined in \eqref{Hamil} and $\mu<0$ denotes the Lagrange multiplier associated to the mass constraint.
\item[(ii)] The set of all minimizers $\mathfrak M_M\subset \A_M$ is orbitally stable under the dynamics of \eqref{equationrho}.
\item[(iii)] The critical temperature $T_c$ defined in \eqref{criticalT} is finite and a minimizer $\rho\in\mathfrak M_M$ is a pure state if and only if $T\in[0,T_c]$.
\item[(iv)] If, in addition, $\beta(s) = s^p$ with $p\in (1,7/5)$, then $T^*<+\infty$.
\end{itemize}
\end{theorem}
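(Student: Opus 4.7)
The plan is to treat the four parts by (a) direct minimization plus concentration-compactness for \textbf{(i)}; (b) the standard Cazenave-Lions argument for \textbf{(ii)}; (c) a convexity and spectral-gap comparison of pure versus mixed states for \textbf{(iii)}; and (d) a scaling-critical interpolation estimate for \textbf{(iv)}. Positivity of $T^\ast$ is immediate from the pure test state $\rho_0=M\,|\psi_0\rangle\langle\psi_0|$, where $\psi_0$ is the Hartree ground state at $T=0$: one has $\FT[\rho_0]=i_{M,0}+T\,\beta(M)$, and since $i_{M,0}<0$ by Lieb's theorem, $\FT[\rho_0]<0$ for $T$ small, so $T^\ast>0$.

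\textbf{(i)} I would first establish coercivity: Hardy-Littlewood-Sobolev gives $\mathcal E_{\rm pot}[\rho]\le C\,\norm{n_\rho}_{6/5}^{2}$, and interpolating $\norm{n_\rho}_1=M$ with $\norm{n_\rho}_3\le C\,\mathcal E_{\rm kin}[\rho]$ (from Sobolev $H^1\hookrightarrow L^6$ applied to each $\psi_j$) yields $\norm{n_\rho}_{6/5}\le C\,M^{3/4}\mathcal E_{\rm kin}[\rho]^{1/4}$, whence $\FT[\rho]\ge\mathcal E_{\rm kin}[\rho]-C\,M^{3/2}\mathcal E_{\rm kin}[\rho]^{1/2}+T\,\tr\beta(\rho)\ge -c_0\,M^{3}$. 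A minimizing sequence $(\rho_n)$ is then bounded in $\A$; up to a subsequence $\rho_n\rightharpoonup\rho$ weakly. I would then apply Lions' concentration-compactness lemma to $(n_{\rho_n})\subset L^1_+$. Vanishing is ruled out since it forces $\mathcal E_{\rm pot}[\rho_n]\to 0$ and hence $\liminf \FT[\rho_n]\ge 0$, contradicting $i_{M,T}<0$ for $T<T^\ast$. Dichotomy is ruled out by the strict binding inequality $i_{M,T}<i_{m,T}+i_{M-m,T}$ for $m\in(0,M)$: placing two near-minimizers of masses $m$ and $M-m$ at separation $R\gg 1$ produces a trial state with free energy $i_{m,T}+i_{M-m,T}-m(M-m)/R+O(R^{-2})$, strictly below the sum. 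In the compact case, $n_{\rho_n}\to n_\rho$ strongly in $L^{6/5}$ up to translations, so $\mathcal E_{\rm pot}$ passes to the limit while $\mathcal E_{\rm kin}$ and $-\s$ are weakly lower semi-continuous (the latter by convexity of $\beta$), and $\rho\in\mathfrak M_M$. The Euler-Lagrange equation is obtained from variations $\rho+\varepsilon\sigma$ with $\tr\sigma=0$, giving $H_\rho+T\beta'(\rho)=\mu\,\mathrm{Id}$ on $\mathrm{Ran}(\rho)$, equivalent to $\rho=(\beta')^{-1}((\mu-H_\rho)/T)$; negativity of $\mu$ follows from $i_{M,T}<0$.

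\textbf{(ii)} Orbital stability is immediate from (i): the flow \eqref{equationrho} is unitary conjugation, so $\tr\rho$ and the full spectrum of $\rho$ (hence $\s[\rho(t)]$) are conserved, as is $\EH[\rho(t)]$; hence $\FT[\rho(t)]=\FT[\rho(0)]$. A hypothetical instability sequence would yield a minimizing sequence at the times of departure, to which the compactness from (i) applies, contradicting $\varepsilon_0$-separation from $\mathfrak M_M$. \textbf{(iii)} For any $\rho=\sum\lambda_j\,|\psi_j\rangle\langle\psi_j|\in\A_M$, convexity of $\beta$ with $\beta(0)=0$ gives $\beta(\lambda_j)\le(\lambda_j/M)\,\beta(M)$, hence $\tr\beta(\rho)\le\beta(M)$ with strict inequality whenever $\rho$ is mixed. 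Therefore pure states achieve $i_{M,T}^{\rm pure}=i_{M,0}+T\,\beta(M)$, and
\begin{equation*}
T_c=\inf\bigl\{\delta(\rho)/\eta(\rho):\rho\in\A_M\text{ mixed}\bigr\},\quad \delta(\rho)=\EH[\rho]-i_{M,0},\quad \eta(\rho)=\beta(M)-\tr\beta(\rho).
\end{equation*}
Positivity of $T_c$ follows by expanding around $\rho_0$: a two-level perturbation of size $\varepsilon$ produces $\delta\sim\varepsilon\,\Delta E$ (with $\Delta E>0$ the spectral gap of $H_{\rho_0}$ above its simple ground state) and $\eta\sim\varepsilon\,\beta'(M)$ (using $\beta'(0)=0$ so that small occupation numbers cost only at higher order); hence $T_c\gtrsim \Delta E/\beta'(M)>0$. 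Finiteness of $T_c$ follows by exhibiting any single mixed test state with finite $\delta/\eta$.

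\textbf{(iv)} The main obstacle is the scaling-critical interpolation inequality
\begin{equation*}
\mathcal E_{\rm pot}[\rho]\le C\,M^{(3-p)/2}\,\bigl(\tr\rho^{p}\bigr)^{1/2}\,\mathcal E_{\rm kin}[\rho]^{1/2},
\end{equation*}
whose exponents are forced by invariance under the spatial scaling $\psi_j\mapsto\lambda^{3/2}\psi_j(\lambda\cdot)$ and the mass scaling $\lambda_j\mapsto\mu\lambda_j$. It is derived by combining HLS with a refined Lieb-Thirring-type bound, trading the standard $\norm{\rho}_\infty^{\alpha}$ factor against $\tr\rho^p$ via $\norm{\rho}_\infty\le(\tr\rho^p)^{1/p}$; the restriction $p\in(1,7/5)$ is precisely what permits the resulting Young inequality step to produce $\tr\rho^p$ with exponent one. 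Granted this bound, $xy\le\tfrac12 x^{2}+\tfrac12 y^{2}$ gives $\mathcal E_{\rm pot}[\rho]\le\tfrac12\mathcal E_{\rm kin}[\rho]+\tfrac{C^{2}}{2}M^{3-p}\tr\rho^{p}$, hence
\begin{equation*}
\FT[\rho]\ge\tfrac{1}{2}\mathcal E_{\rm kin}[\rho]+\Bigl(T-\tfrac{C^{2}}{2}M^{3-p}\Bigr)\tr\rho^{p}\ge 0
\end{equation*}
for all $\rho\in\A_M$ as soon as $T\ge C^{2}M^{3-p}/2$; this forces $i_{M,T}\ge 0$ for such $T$ and therefore $T^{\ast}\le C^{2}M^{3-p}/2<\infty$.
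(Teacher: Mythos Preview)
Your overall architecture is right, but the heart of part~(i)---the strict binding inequality---has a genuine gap that the paper handles by a different and more delicate route.

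You claim that placing two \emph{near}-minimizers at separation $R$ yields a trial state with free energy $i_{m,T}+i_{M-m,T}-m(M-m)/R+O(R^{-2})$. But the $O(R^{-2})$ is not justified. If you approximate by finite-rank states with compactly supported eigenfunctions (so that the two pieces are genuinely disjoint and the entropy splits), the support radius $R_0$ depends on the approximation error~$\varepsilon$, and the separation $R$ must exceed $2R_0(\varepsilon)$. You then obtain
\[
i_{M,T}\le i_{m,T}+i_{M-m,T}+2\varepsilon-\frac{c\,m(M-m)}{R}\,,\qquad R\ge 2R_0(\varepsilon)\,.
\]
Sending $R\to\infty$ and then $\varepsilon\to 0$ gives only the \emph{non-strict} inequality. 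To make the attraction term beat $2\varepsilon$ you would need an a~priori bound $R_0(\varepsilon)\lesssim\varepsilon^{-1}$, which is precisely a decay estimate on (near-)minimizers that you do not have.

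The paper resolves this by a bootstrap: it proves the binding inequality only for \emph{actual} minimizers (Corollary~\ref{lbindinginequality}), using the decay estimate $\int_{|x|>R}n_\rho\le C/R^2$ of Proposition~\ref{decayestimate}. That decay in turn relies on $\mu<0$, which is obtained from the virial identity together with assumption~($\beta$3) (Lemma~\ref{pLessThan3}); your one-line claim that ``negativity of $\mu$ follows from $i_{M,T}<0$'' is not sufficient and is precisely where ($\beta$3) enters. In the dichotomy step, the paper then shows that the weak limit of the inner piece is itself a minimizer at mass $M^{(1)}$, re-runs the argument on the outer piece to extract a second minimizer at mass $M^{(2)}$, and only \emph{then} invokes the strict binding inequality (now applicable) to reach a contradiction. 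This two-pass extraction is the main new idea and is missing from your sketch.

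Two smaller points. In~(iii), your perturbative computation shows $\liminf_{\rho\to\rho_0}\delta(\rho)/\eta(\rho)\ge\Delta E/\beta'(M)$, but $T_c$ is the infimum over \emph{all} mixed states; you still need to rule out small ratios far from $\rho_0$ (the paper argues instead through the self-consistent equation along a sequence $T_n\downarrow 0$). In~(iv), your interpolation inequality with $(\tr\rho^p)^{1/2}$ has the right scaling but is only asserted; the crude substitution $\|\rho\|_\infty\le(\tr\rho^p)^{1/p}$ into the standard Lieb--Thirring bound produces the exponent $1/(3p)$ on $\tr\rho^p$, not $1/2$, so the Young step does not close as written. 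The paper instead uses the dual Lieb--Thirring inequality on the eigenvalue sum $\sum|\mu_j(V_\rho)|^\gamma$ with $\gamma=p/(p-1)$, then HLS and Sobolev, arriving at $\FT[\rho]\ge\tfrac12\mathcal E_{\rm kin}-T^{-1/(p-1)}K\,\mathcal E_{\rm kin}^{1+\eta}$ with $\eta=(7-5p)/(4(p-1))>0$; combined with the uniform bound $\mathcal E_{\rm kin}\le C^2M^3$ this yields $T^*<\infty$.
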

The proof of this theorem will be a consequence of several more detailed results. We shall mostly rely on the concentration-compactness method, adapted to the framework of trace class operators. Our approach is therefore similar to the one of \cite{DoFeLe} and \cite{LeLe}, with differences due, respectively, to the sign of the interaction potential and to non-zero temperature effects. Uniqueness of minimizers (up to translations and rotations) 
is an open question for $T> T_c$. For $T\in[0,T_c]$, the problem is reduced to the pure state case, for which uniqueness has been proved in \cite{Lie} (also see \cite{Len2}).

\medskip This paper is organized as follows: In Section \ref{sec:basic} we collect several basic properties of the free energy. In particular we establish the existence of a maximal temperature $T^*>0$ and derive the 
self-consistent equation for $\rho \in \mathfrak H_M$. 
In Section \ref{sec:a-priori}, we derive an important a priori inequality for minimizers, the so-called \emph{binding inequality}, which is henceforth used in proving the existence of minimizers in Section \ref{sec:exist}. 
Having done that, we shall prove in Section \ref {sec:critical} that minimizers are mixed states for $T>T_c$, and we shall also characterize $T_c$ in terms of the eigenvalue problem associated to the case $T=0$. 
In Section~\ref{sec:finite}, we shall prove that $T^*$ is indeed finite in the polytropic case, provided $p<7/5$ and furthermore establish some qualitative properties of the minimizers as $T \to T^*< +\infty$. Finally, Section~\ref{conclusion} is devoted to some remarks on the sign of the Lagrange multiplier associated to the mass constraint and related open questions.

\section{Basic properties of the free energy}\label{sec:basic}

\subsection{Boundedness from below and splitting property}

As a preliminary step, we observe that the functional $\FT$ introduced in \eqref{freeenergy} is well defined and $i_{M,T}>-\infty$.
\begin{lemma}\label{Lem:Bound} Assume that ($\beta$1)--($\beta$2) hold. The free energy $\FT$ is well-defined on $\A_M$ and $i_{M,T}$ is bounded from below. If $\FT[\rho]$ is finite, then $\sqrt{n_\rho}$ is bounded in $H^1(\R^3)$.\end{lemma}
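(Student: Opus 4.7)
The plan is to trap $\FT[\rho]$ from below between something of order $\mathcal E_{\rm kin}[\rho]$ and something of order $\sqrt{\mathcal E_{\rm kin}[\rho]}$, via two classical ingredients: the Hardy--Littlewood--Sobolev inequality for the Newtonian interaction, and the Hoffmann--Ostenhof inequality to turn the kinetic energy of a density matrix into an $H^1$ bound on $\sqrt{n_\rho}$. Completing the square in $\sqrt{\mathcal E_{\rm kin}[\rho]}$ then furnishes a uniform lower bound, and reading the same inequality in reverse yields the announced $H^1$ control on $\sqrt{n_\rho}$.

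To control the potential part, Hardy--Littlewood--Sobolev gives $\Q[\rho]\leq C\,\|n_\rho\|_{L^{6/5}}^{2}$. The Hoffmann--Ostenhof inequality
\[
\int_{\R^3}|\nabla\sqrt{n_\rho}|^2\,\D x \;\leq\; \mathcal E_{\rm kin}[\rho]
\]
is a consequence of the Cauchy--Schwarz estimate $|\nabla n_\rho|^2 \leq 4\,n_\rho \sum_j \lambda_j |\nabla\psi_j|^2$ applied to the decomposition \eqref{decom}. Combined with the Sobolev embedding $H^1(\R^3)\hookrightarrow L^6(\R^3)$ this yields $\|n_\rho\|_{L^3}\leq C\,\mathcal E_{\rm kin}[\rho]$, and interpolating with $\|n_\rho\|_{L^1}=M$ at exponent $3/4$ gives $\|n_\rho\|_{L^{6/5}} \leq C\,M^{3/4}\mathcal E_{\rm kin}[\rho]^{1/4}$, so that $\Q[\rho]\leq C\,M^{3/2}\sqrt{\mathcal E_{\rm kin}[\rho]}$. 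For the entropy, convexity ($\beta$1) together with $\beta(0)=\beta'(0)=0$ from ($\beta$2) forces $\beta\geq 0$ on $[0,\infty)$, so that $-T\,\s[\rho]=T\sum_j\beta(\lambda_j)\in[0,+\infty]$. In particular $\FT[\rho]$ is well-defined in $(-\infty,+\infty]$ on $\A_M$, and
\[
\FT[\rho]\;\geq\;\mathcal E_{\rm kin}[\rho]-C\,M^{3/2}\sqrt{\mathcal E_{\rm kin}[\rho]},
\]
from which $i_{M,T}\geq -C'M^3$ uniformly in $T\geq 0$ by completing the square in $\sqrt{\mathcal E_{\rm kin}[\rho]}$.

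For the final assertion, the same inequality shows that finiteness of $\FT[\rho]$ bounds $\mathcal E_{\rm kin}[\rho]$ by an explicit function of $\FT[\rho]$ and $M$; combined with the Hoffmann--Ostenhof estimate and the mass constraint, this gives $\|\sqrt{n_\rho}\|_{H^1(\R^3)}^2\leq M+\mathcal E_{\rm kin}[\rho]$, as required. I do not foresee a genuine obstacle: both inequalities used are standard, the subcritical scaling of the attractive potential relative to the kinetic term ($\sqrt{\mathcal E_{\rm kin}}$ vs.~$\mathcal E_{\rm kin}$) does the heavy lifting, and ($\beta$2) neutralizes the entropy irrespective of $T$.
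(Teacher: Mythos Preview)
Your proposal is correct and follows essentially the same route as the paper: Hardy--Littlewood--Sobolev for the interaction, the Hoffmann--Ostenhof inequality $\|\nabla\sqrt{n_\rho}\|_{L^2}^2\le\tr(-\Delta\,\rho)$ together with Sobolev to reach $\Q[\rho]\le C\,M^{3/2}\,\mathcal E_{\rm kin}[\rho]^{1/2}$, and completion of the square. The only cosmetic difference is that the paper also bounds the entropy from above via $\beta(\lambda_j)\le(\beta(M)/M)\,\lambda_j$ (convexity and $\beta(0)=0$), so that $\tr\beta(\rho)\le\beta(M)<\infty$ and $\FT$ is genuinely finite on $\A_M$, whereas you allow a priori $+\infty$; this does not affect the lower bound on $i_{M,T}$ or the $H^1$ control.
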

\begin{proof} In order to establish a bound from below, we shall first show that the potential energy $\Q[\rho]$ can be bounded in terms of the kinetic energy. To this end, note that for every $\rho\in\A$ we have
\[
\Q[\rho]\leq C\,\norm{n_\rho}_{L^1}^{3/2}\,\norm{n_\rho}_{L^3}^{1/2}
\]
by the Hardy-Littlewood-Sobolev inequality. Next, by Sobolev's embedding, we know that $\norm{n_\rho}_{L^3}$ is controlled by $\norm{\nabla\sqrt{n_\rho}}_{L^2}^2$ which, using the decomposition \eqref{decom}, is bounded by $\tr(-\Delta\,\rho)$. Hence we can conclude that
\begin{equation}\label{potentialestimate}
\Q[\rho]\leq C\,\norm{n_\rho}_{L^1}^{3/2}\,\tr(-\Delta\,\rho)^{1/2}
\end{equation}
for some generic positive constant $C$. By conservation of mass, the free energy is therefore bounded from below on $\A_M$ according to
\[
\FT[\rho]\geq \tr(-\Delta\,\rho)-C\,M^{3/2}~\tr(-\Delta\,\rho)^{1/2}\geq -\frac14~C^2 \,M^3
\]
uniformly w.r.t.~$\rho\in\mathcal H_M$, thus establishing a lower bound on $i_{M,T}$. For the entropy term $\s[\rho]=- \tr\beta(\rho)$ we observe that, since $\beta$ is convex and $\beta(0)=0$, it holds $0\leq\beta(\rho)\leq\beta(M)\,\rho$ for all $\rho\in\A$ and $\beta(\rho)\in\mathfrak S_1$, provided $\rho\in\mathfrak S_1$. Hence, all quantities involved in the definition of $\FT$ are well-defined and bounded on $\A_M$. \end{proof}

Throughout this work, we shall use smooth \emph{cut-off functions} defined as follows. Let $\chi$ be a fixed smooth function on $\R^3$ with values in $[0,1]$ such that, for any $x\in\R^3$, $\chi(x)=1$ if $|x|<1$ and $\chi(x)=0$ if $|x|\geq2$. For any $R>0$, we define $\chi_R$ and $\xi_R$ by
\begin{equation}\label{cutoff}
\chi_R(x)=\chi(x/R)\quad\mbox{and}\quad\xi_R(x)=\sqrt{1- \chi(x/R)^2}\quad\forall\;x\in\R^3\,.
\end{equation}
The motivation for introducing such cut-off functions is that, for any $u\in H^1(\R^3)$ and any potential~$V$, we have the identities
\begin{multline*}
\int_{\R^3}|u|^2\,\D x=\int_{\R^3}|\chi_R\,u|^2\,\D x+\int_{\R^3}|\xi_R\,u|^2\,\D x\\
\mbox{and}\quad\int_{\R^3}V\,|u|^2\,\D x=\int_{\R^3}V\,|\chi_R\,u|^2\,\D x+\int_{\R^3}V\,|\xi_R\,u|^2\,\D x\;,
\end{multline*}
and the IMS truncation identity
\begin{equation}\label{IMS}
\int_{\R^3}| \nabla(\chi_R\,u)|^2\,\D x+\int_{\R^3}| \nabla(\xi_R\,u)|^2\,\D x=\int_{\R^3}|\nabla u|^2\,\D x-\int_{\R^3}|u|^2\,\underbrace{\nabla\cdot\left(\nabla\chi_R+\nabla\xi_R\right)}_{=O(R^{-2})\;\mbox{as}\; R\to\infty}\,\D x\;.
\end{equation}
A first application of this truncation method is given by the following splitting lemma.
\begin{lemma}\label{Lem:Truncation} For $\rho\in\A_M$, we define $\rho_R^{(1)}=\chi_R\,\rho\,\chi_R$ and $\rho_R^{(2)}=\xi_R\,\rho\,\xi_R$. Then it holds:
\[
\mathcal S[\rho_R^{(1)}]+\mathcal S[\rho_R^{(2)}] \geq \mathcal S[\rho] \quad\mbox{and} \quad \mathcal E_{\rm kin}[\rho_R^{(1)}]+\mathcal E_{\rm kin}[\rho_R^{(2)}]\le\mathcal E_{\rm kin}[\rho]+O(R^{-2})\quad\text{as}\;R\to +\infty\;.
\]
\end{lemma}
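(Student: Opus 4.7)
The plan is to treat the kinetic energy bound and the entropy bound with rather different tools.

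For the kinetic inequality, I would use the spectral decomposition \eqref{decom} to reduce the statement to a pointwise estimate on each eigenfunction. Applying the IMS identity \eqref{IMS} to $u=\psi_j$, multiplying by $\lambda_j\ge0$ and summing over $j\in\N$ (which is justified by monotone convergence, all terms being non-negative and absolutely summable because $\rho\in\A_M$), the identity $\sum_j\lambda_j\,|\psi_j|^2=n_\rho$ combined with the mass constraint $\int n_\rho\,\D x=M$ collapses the truncation remainder to a uniform term of the form $C\,M\,R^{-2}$, yielding the announced bound.

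For the entropy inequality, which in view of $\s[\rho]=-\tr\beta(\rho)$ amounts to $\tr\beta(\rho_R^{(1)})+\tr\beta(\rho_R^{(2)})\le\tr\beta(\rho)$, the plan is a pinching argument built on the algebraic identity $\chi_R^2+\xi_R^2=1$. Introduce the isometry
\[
W:L^2(\R^3)\to L^2(\R^3)\oplus L^2(\R^3)\;,\quad W\p=(\chi_R\,\p,\,\xi_R\,\p)\;,
\]
for which $W^*W=\chi_R^2+\xi_R^2=I$. Then $W\rho W^*$ on $L^2\oplus L^2$ is unitarily equivalent to $\rho$ on the range of $W$ and vanishes on its orthogonal complement, so the assumption $\beta(0)=0$ yields $\tr\beta(W\rho W^*)=\tr\beta(\rho)$. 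Its diagonal blocks are precisely $\rho_R^{(1)}$ and $\rho_R^{(2)}$; denoting by $U$ the self-adjoint unitary acting as $+I$ on the first summand and $-I$ on the second, the pinching onto the block diagonal reads $A\mapsto\tfrac12(A+UAU)$. Combining this with the trace-convexity $\tr\beta(\tfrac12(A+B))\le\tfrac12(\tr\beta(A)+\tr\beta(B))$ and with unitary invariance of the trace gives
\[
\tr\beta(\rho_R^{(1)})+\tr\beta(\rho_R^{(2)})=\tr\beta\bigl(\tfrac12(W\rho W^*+UW\rho W^*U)\bigr)\le\tr\beta(W\rho W^*)=\tr\beta(\rho)\;.
\]

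The principal obstacle is this entropy half: since ($\beta$1) only requires scalar convexity of $\beta$ on $[0,\infty)$, a direct Jensen-type step at the operator level is unavailable, and this is what forces the pinching detour through the auxiliary Hilbert space $L^2\oplus L^2$. The trace-convexity of $A\mapsto\tr\beta(A)$ used above is nonetheless classical and follows from Peierls' inequality applied in an orthonormal eigenbasis of $\tfrac12(A+B)$, using only the scalar convexity of $\beta$ already assumed. The kinetic part is by contrast a routine bookkeeping using only $\tr\rho=M$.
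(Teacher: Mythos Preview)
Your argument is correct in both halves. The kinetic part coincides with the paper's proof, which simply invokes the IMS identity \eqref{IMS} and sums over the spectral decomposition exactly as you describe.

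For the entropy inequality the paper takes a shorter route: it quotes the Brown--Kosaki inequality \cite{BrKo} (in the form used in \cite[Lemma~3.4]{DoFeLe}), which for a contraction $C$ and convex $\beta$ with $\beta(0)=0$ gives $\tr\beta(C\rho C)\le\tr(C\beta(\rho)C)$; applying this separately with $C=\chi_R$ and $C=\xi_R$ and adding, the identity $\chi_R^2+\xi_R^2=1$ immediately yields $\tr\beta(\rho_R^{(1)})+\tr\beta(\rho_R^{(2)})\le\tr\beta(\rho)$. Your route is different but equally standard: you embed via the isometry $W$ into $L^2\oplus L^2$, pinch onto the block diagonal with the unitary $U=\mathrm{diag}(I,-I)$, and then use only the trace convexity of $A\mapsto\tr\beta(A)$ (justified, as you note, by Peierls' inequality in an eigenbasis of $\tfrac12(A+B)$). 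The advantage of your approach is that it is entirely self-contained and uses nothing beyond scalar convexity of $\beta$ and $\beta(0)=0$; the paper's citation is more concise but relies on an external reference for the operator Jensen step. Both arguments ultimately exploit the same algebraic fact $\chi_R^2+\xi_R^2=1$, just packaged differently.
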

\begin{proof} The assertion for $\mathcal E_{\rm kin}[\rho]$ is a straightforward consequence of \eqref{IMS}, namely
\[
\tr(-\Delta\,\rho_R^{(1)})+\tr(-\Delta\,\rho_R^{(2)})=\tr(-\Delta\,\rho)+O(R^{-2})\quad\text{as}\;R\to +\infty\;.
\]
For the entropy term, we can use the \emph{Brown-Kosaki inequality} (cf.~\cite{BrKo}) as in \cite[Lemma 3.4]{DoFeLe} to obtain
\[
\tr\beta(\rho_R^{(1)})+\tr\beta(\rho_R^{(2)}) \le \tr \beta(\rho)\;.
\]
\end{proof}

\subsection{Sub-additivity and maximal temperature}

In order to proceed further, we need to study the dependence of $i_{M,T}$ with respect to $M$ and $T$ and prove that the maximal temperature $T^*$ as defined in \eqref{Tstar} is in fact 
positive. To this end, we rely on the translation invariance of the model. For a given $y\in\R^3$, denote by $\tau_y: L^2(\R^3)\rightarrow L^2(\R^3)$ the translation operator given by
\[
(\tau_yf)=f(\cdot-y)\quad\forall\;f\in L^2(\R^3)\;.
\]
\begin{proposition}\label{sub-additivity} Let $i_{M,T}$ be given by \eqref{infimum} and assume that ($\beta$1)--($\beta$2) hold.  Then the following properties hold:
\begin{itemize}
\item[(i)] As a function of $M$, $i_{M,T}$ is non-positive and sub-additive: for any $M>0$, $m\in(0,M)$ and $T>0$, we have
\[
i_{M,T}\leq i_{M-m,T}+i_{m,T}\le0\;.
\]
\item[(ii)] The function $i_{M,T}$ is a non-increasing function of $M$ and a non-decreasing function of $T$. For any $T>0$, we have $i_{M,T}<0$ if and only if $T<T^*$.
\item[(iii)] For any $M>0$, $T^*(M)> 0$ is positive, possibly infinite. As a function of $M$ it is increasing and satisfies
\[
T^*(M)\geq \max_{0\leq m\leq M}\frac{{m}^3}{\beta(m)}~|i_{1,0}|\;.
\]
As a consequence, $T^*>0$ and $T^*(M)=+\infty$ for any $M>0$ if $\lim_{s\to 0_+}\beta(s)/s^3=0$. \end{itemize}
\end{proposition}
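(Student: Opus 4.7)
The proof proceeds in the order (i), (ii), (iii); (i) feeds the others. For the non-positivity in (i), apply a dilation $\psi_j\mapsto\eta^{-3/2}\psi_j(\cdot/\eta)$ with $\eta>0$ to the eigenfunctions of any $\rho\in\A_M$, keeping occupation numbers fixed, so that
\[
\FT[\rho_\eta]=\eta^{-2}\,\mathcal E_{\rm kin}[\rho]-\eta^{-1}\,\Q[\rho]+T\,\tr\beta(\rho)\;.
\]
Choosing $\rho_N:=(M/N)\sum_{j=1}^N|\psi_j\rangle\langle\psi_j|$ with $N$ orthonormal $H^1$ test functions gives $\tr\beta(\rho_N)=N\beta(M/N)\to 0$ as $N\to\infty$ (by $\beta(s)/s\to\beta'(0)=0$ from ($\beta2$)); sending first $\eta\to\infty$ and then $N\to\infty$ produces $i_{M,T}\le 0$. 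For the \emph{sub-additivity} --- the technically most delicate step --- I would take $\varepsilon$-approximate minimizers $\rho_1,\rho_2$ of $i_{M-m,T},i_{m,T}$ and form
\[
\tilde\rho_{R,y}:=\chi_R\,\rho_1\,\chi_R+\tau_y\,\chi_R\,\rho_2\,\chi_R\,\tau_{-y}\;.
\]
Once $|y|>4R$ the two summands have ranges in the orthogonal subspaces $L^2(B(0,2R))$ and $L^2(B(y,2R))$, so the kinetic energies and entropies add exactly while the potential cross-term is $O(|y|^{-1})$. Combining Lemma \ref{Lem:Truncation} (which yields $\mathcal E_{\rm kin}[\chi_R\rho_i\chi_R]\le\mathcal E_{\rm kin}[\rho_i]+O(R^{-2})$), dominated convergence on $n_{\chi_R\rho_i\chi_R}=\chi_R^2\,n_{\rho_i}$ (giving $\Q[\chi_R\rho_i\chi_R]\to\Q[\rho_i]$), and the Brown--Kosaki bound $\tr\beta(\chi_R\rho_i\chi_R)\le\tr\beta(\rho_i)$, these one-sided estimates exactly close the limsup $\limsup_{R,|y|}\FT[\tilde\rho_{R,y}]\le\FT[\rho_1]+\FT[\rho_2]$. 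A final mass rescaling by $\alpha_R=M/\tr\tilde\rho_{R,y}\to 1$ --- whose effect on the entropy is controlled by the $\Delta_2$-growth $\beta(2s)\le 8\beta(s)$ implied by ($\beta3$) --- restores the constraint $\tr=M$ without destroying the limit, giving sub-additivity.

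Part (ii) follows quickly. Monotonicity of $T\mapsto i_{M,T}$ is immediate since $\FT[\rho]=\EH[\rho]+T\tr\beta(\rho)$ is non-decreasing in $T$ for each $\rho\ge 0$. Monotonicity of $M\mapsto i_{M,T}$ is a direct consequence of (i): $i_{M_2,T}\le i_{M_2-M_1,T}+i_{M_1,T}\le i_{M_1,T}$. For the equivalence $i_{M,T}<0\Leftrightarrow T<T^*(M)$, I would use that $T\mapsto i_{M,T}$, being an infimum of affine functions of $T$, is concave and hence continuous on $(0,\infty)$; combined with the monotonicity and $i_{M,0}<0$ from \cite{Lie}, this forces $\{T>0:i_{M,T}<0\}$ to coincide with the open interval $(0,T^*(M))$, the endpoint being excluded by continuity.

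For (iii), the key test state is Lieb's zero-temperature pure-state minimizer: for each $m\in(0,M]$, $\rho_0^{(m)}=m\,|\psi_m\rangle\langle\psi_m|$ attains $\EH[\rho_0^{(m)}]=i_{m,0}$, and the joint scaling (occupation number $\times$ spatial dilation) yields $i_{m,0}=-\,m^3\,|i_{1,0}|$. Since $\tr\beta(\rho_0^{(m)})=\beta(m)$,
\[
i_{m,T}\le -\,m^3\,|i_{1,0}|+T\,\beta(m)\;,
\]
which is strictly negative whenever $T<m^3\,|i_{1,0}|/\beta(m)$. By the $M$-monotonicity of (ii), $i_{M,T}\le i_{m,T}$ for $m\le M$, so $T^*(M)\ge m^3\,|i_{1,0}|/\beta(m)$ for every such $m$; the supremum over $m\in(0,M]$ gives the stated lower bound. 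Monotonicity of $M\mapsto T^*(M)$ follows from the same chain, and if $\lim_{s\to 0_+}\beta(s)/s^3=0$ then $\sup_{m\in(0,M]}m^3/\beta(m)=+\infty$, giving $T^*(M)=+\infty$ for every $M>0$.
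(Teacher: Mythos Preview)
Your argument is essentially correct, but there is one point that needs attention and several instructive differences from the paper's route.

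The point to fix: in the sub-additivity step you invoke ($\beta3$) to obtain the $\Delta_2$-bound $\beta(2s)\le 8\beta(s)$ controlling the mass-rescaling, yet the proposition assumes only ($\beta1$)--($\beta2$). This is easily repaired without ($\beta3$): since $\beta\in C^1$ and every eigenvalue $\mu_j$ of $\tilde\rho_{R,y}$ is at most $M$, one has $\beta(\alpha_R\mu_j)-\beta(\mu_j)\le\beta'(\alpha_R M)(\alpha_R-1)\mu_j$, and summing gives $\tr\beta(\alpha_R\tilde\rho)-\tr\beta(\tilde\rho)\le\beta'(2M)\,(\alpha_R-1)\,M\to 0$. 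More to the point, the paper avoids the rescaling altogether by a cleaner device: it approximates the near-minimizers $\rho\in\A_{M-m}$ and $\sigma\in\A_m$ directly by finite-rank operators with smooth \emph{compactly supported} eigenfunctions (density in $H^1$), so that after a sufficiently large translation the two pieces have disjoint supports, $\rho\,\sigma_{R\mathsf e}=0$, and their sum lies \emph{exactly} in $\A_M$ --- no cut-off, no Brown--Kosaki, no mass correction.

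Your other deviations are harmless variants. For non-positivity you dilate an equi-distributed rank-$N$ state and use $N\beta(M/N)\to 0$; the paper instead iterates sub-additivity, $i_{M,T}\le n\,i_{M/n,T}$, together with the pure-state bound $i_{m,T}\le i_{m,0}+T\beta(m)$. For the endpoint in (ii) you invoke concavity of $T\mapsto i_{M,T}$ (as an infimum of affine functions) to get continuity at $T^*$; the paper reaches the same conclusion from the one-sided estimate $i_{M,T}\le i_{M,T_0}+(T-T_0)\beta(M)$. Part (iii) matches the paper.
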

\begin{proof} We start with the proof of the sub-additivity inequality. Consider two states $\rho\in\A_{M-m}$ and $\sigma\in\A_m$, such that $\FT[\rho]\leq i_{M-m,T}+\e$ and $\FT[\sigma]\leq i_{m,T}+\e$. By density of finite rank operators in $\A$ and of smooth compactly supported functions in $L^2$, we can assume that
\[
\rho =\sum_{j=1}^J\lambda_j\,|\psi_j\rangle\langle \psi_j|\;,
\]
with smooth eigenfunctions $(\psi_j)^J_{j=1}$ having compact support in a ball $B(0,R)\subset\R^3$, for some $J\in\N$. After approximating $\sigma$ analogously, we define $\sigma_{R\mathsf e}:=\tau^*_{3R\mathsf e}\,\sigma\,\tau_{3R\mathsf e}$, where $\mathsf e\in\mathbb S^2 \subset\R^3$ is a fixed unit vector and $\tau$ is the translation operator defined above. Note that we have $\rho\,\sigma_{R\mathsf e}=\sigma_{R\mathsf e}\,\rho=0$, hence $\rho+\sigma_{R\mathsf e}\in\A_M$ and $\tr\beta(\rho+\sigma_{R\mathsf e})=\tr\beta(\rho)+\tr\beta(\sigma_{R\mathsf e})$. Thus we have
\[
i_{M,T}\leq \FT[\rho+ \sigma_{R\mathsf e}]=\FT[\rho]+\FT[\sigma]+O(1/R)\leq i_{M-m,T}+i_{m,T}+2\,\e\;,
\]
where the $O(1/R)$ term has in fact negative sign so that we can simply drop it. Taking the limit $\e\rightarrow0$ yields the desired inequality.

Next, consider a minimizer $\rho$ of $\EH$ subject to $\tr\rho=M$. It is given by an appropriate rescaling of the pure state obtained in \cite{Lie}. For an arbitrary $\lambda\in(0,\infty)$, let $(U_\lambda\,f)(x):=\lambda^{3/2} f(\lambda\,x)$ and observe that $\rho_\lambda:=U_\lambda^*\,\rho\,U_\lambda\in\A_M$. As a function of $\lambda$, the Hartree energy $\EH[\rho_\lambda]=\lambda^2\,\mathcal E_{\rm kin}[\rho] - \lambda\,\mathcal E_{\rm pot}[\rho]$ has a minimum for some $\lambda>0$. Computing $\frac{\D}{\D \lambda}\,\EH[\rho_\lambda] =0$, we infer that $\lambda=\mathcal E_{\rm pot}[\rho]/(2\,\mathcal E_{\rm kin}[\rho])$ and moreover
\[
i_{M,0}\equiv \EH[\rho]=-\frac{1}{4}\frac{(\Q[\rho])^2}{\mathcal E_{\rm kin}[\rho]}\;.
\]
As a consequence, we have $i_{M,0}=M^3~i_{1,0}$ and\begin{equation}\label{freeenergypure}
\FT[\rho]=i_{M,0}+T\,\beta(M)=\beta(M)\,\left(T-\frac{M^3}{\beta(M)}|i_{1,0}|\right)\ge i_{M,T}\;,
\end{equation}
thus proving that $i_{M,T}<0$ for $T$ small enough.

Since $\beta$ is non-negative function on $[0,\infty)$, the map $T\mapsto \FT[\rho]$ is increasing. By taking the infimum over all admissible $\rho\in\A_M$, we infer that $T\mapsto i_{M,T}$ is non-decreasing. The function $M\mapsto i_{M,T}$ is non-increasing as a consequence of the sub-additivity property. As a consequence, $T^*(M)$ is a non-decreasing function of $M$, such that
\[
T^*(M)\ge\lim_{M\rightarrow 0_+} T^*(M)\;.
\]
By the sub-additivity inequality and \eqref{freeenergypure}, we obtain
\[
i_{M,T}\leq n\;i_{M/n,T} \leq n\,\beta\left(\tfrac Mn\right)T-\frac{M^3}{n^2}~|i_{1,0}|=n\,\beta\left(\tfrac Mn\right)\left(T-\frac{M^3}{n^3\,\beta\left(\tfrac Mn\right)}~|i_{1,0}|\right)
\]
for any $n\in\N^*$. Since $\lim_{s\rightarrow0_+}\beta(s)/s=0$, we find that $i_{M,T}\leq 0$ by passing to the limit as $n\to\infty$. In the particular case $\lim_{s\rightarrow0_+}\beta(s)/s^3=0$, we conclude that $T^*(M)=+\infty$ for any $M>0$. Similarly, using again the sub-additivity inequality and \eqref{freeenergypure}, we infer
\[
i_{M,T}\leq i_{m,T}\leq \beta(m)\left(T-\frac{{m}^3}{\beta(m)}\,|i_{1,0}|\right)\quad\forall\;m\in(0,M]\;,
\]
which provides the lower bound on $T^*(M)$ in assertion (iii). By definition of $T^*(M)$, we also know that $i_{M,T}$ is negative for any $T<T^*(M)$. From the monotonicity of $T\mapsto i_{M,T}$, we obtain that $i_{M,T}=0$ if $T>T^*$ and $T^*<\infty$. Because of the estimate $i_{M,T}\le i_{M,T_0}+(T-T_0)\,\beta(M)$ for any $T>T_0$, we also find that $i_{M,T^*}=0$ if $T^*<\infty$.\end{proof}

\subsection{Euler-Lagrange equations and Lagrange multipliers} \label{sec:euler}

As in \cite{DoFeMa,DoFeLe}, we obtain the following characterization of $\rho\in\mathfrak M_M$.
\begin{proposition}\label{E-L} Let $M>0$, $T\in(0,T^*(M)]$ and assume that ($\beta$1)--($\beta$2) hold. Consider a density matrix operator $\rho\in\A_M$ which minimizes $\FT$. Then $\rho$ is such that
\begin{equation}\label{Identity:virial}
\tr(V_\rho\,\rho)=4\tr(-\Delta\,\rho)
\end{equation}
and satisfies the self-consistent equation
\begin{equation}\label{SCequation}
\rho=(\beta')^{-1}\big((\mu -H_\rho)/T\big)\,,
\end{equation}
where $H_\rho$ is the mean-field Hamiltonian defined in \eqref{Hamil} and $\mu \le 0$ denotes the Lagrange multiplier associated to the mass constraint $\tr \rho = M$. Explicitly, $\mu$ is given by
\begin{equation}\label{Identity:LagrangeMultiplier}
\mu =\frac1M\,\tr\left((H_\rho+T\,\beta'(\rho))\,\rho\right)\,.
\end{equation}\end{proposition}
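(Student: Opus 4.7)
The three assertions are rather different in nature, so I would establish them in the natural order: the virial identity \eqref{Identity:virial}, then the self-consistent equation \eqref{SCequation} together with \eqref{Identity:LagrangeMultiplier}, and finally the sign condition $\mu\le0$. For the virial identity, the plan is to reuse the mass- and entropy-preserving scaling $\rho_\lambda:=U_\lambda^*\,\rho\,U_\lambda$ with $(U_\lambda f)(x)=\lambda^{3/2}\,f(\lambda\,x)$ already exploited in the proof of Proposition~\ref{sub-additivity}. Since $U_\lambda$ is unitary, $\rho_\lambda$ and $\rho$ are unitarily equivalent, so $\tr\rho_\lambda=M$ and $\s[\rho_\lambda]=\s[\rho]$, hence $\rho_\lambda\in\A_M$; a direct change of variables in the definitions of the kinetic and potential energies gives
\[
\FT[\rho_\lambda]=\lambda^2\,\mathcal E_{\rm kin}[\rho]-\lambda\,\Q[\rho]-T\,\s[\rho]\;.
\]
The minimality of $\rho$ then forces $\frac{\D}{\D\lambda}\FT[\rho_\lambda]|_{\lambda=1}=0$, i.e.~$\Q[\rho]=2\,\mathcal E_{\rm kin}[\rho]$, which is precisely \eqref{Identity:virial}.

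For the Euler--Lagrange equation, write $\rho=\sum_j\lambda_j\,|\psi_j\rangle\langle\psi_j|$, where $\psi_j\in H^1(\R^3)$ whenever $\lambda_j>0$. I would test minimality against the trace-zero, rank-two perturbations $\sigma:=|\phi\rangle\langle\phi|-|\psi_j\rangle\langle\psi_j|$, with $\phi\in H^1(\R^3)$ an arbitrary $L^2$-normalized vector and $j$ any index with $\lambda_j>0$; since $\rho-t\,|\psi_j\rangle\langle\psi_j|\ge0$ for $t\in[0,\lambda_j]$, one checks that $\rho+t\,\sigma\in\A_M$ in the same range. Using that $\Q$ is quadratic in $n_\rho$ and the standard spectral identity $\frac{\D}{\D t}\tr\beta(\rho+t\,\sigma)|_{t=0}=\tr(\beta'(\rho)\,\sigma)$, one obtains
\[
0\le\left.\frac{\D}{\D t}\FT[\rho+t\,\sigma]\right|_{t=0^+}=\tr\bigl((H_\rho+T\,\beta'(\rho))\,\sigma\bigr)=\langle K_\rho\,\phi,\phi\rangle-\langle K_\rho\,\psi_j,\psi_j\rangle\;,
\]
where $K_\rho:=H_\rho+T\,\beta'(\rho)$. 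Since this inequality must hold for every normalized $\phi$ and every occupied index $j$, the common value $\langle K_\rho\,\psi_j,\psi_j\rangle=:\mu$ realizes the bottom of the spectrum of $K_\rho$, independently of $j$; in particular each such $\psi_j$ is a ground state of $K_\rho$ with eigenvalue $\mu$, and $K_\rho\ge\mu$ on $L^2(\R^3)$. Because the $\psi_j$ simultaneously diagonalize $\beta'(\rho)$ with eigenvalues $\beta'(\lambda_j)$, the identity $K_\rho\,\psi_j=\mu\,\psi_j$ translates to $H_\rho\,\psi_j=(\mu-T\,\beta'(\lambda_j))\,\psi_j$, i.e.~$\rho=(\beta')^{-1}((\mu-H_\rho)/T)$ on the range of $\rho$; on $\ker\rho$, $\beta'(\rho)=0$ so $K_\rho\ge\mu$ reduces to $H_\rho\ge\mu$, and with the convention $(\beta')^{-1}\equiv0$ on $(-\infty,0]$ (compatible with $\beta'(0)=0$ and the monotonicity of $\beta'$), \eqref{SCequation} extends to all of $L^2(\R^3)$. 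Taking the trace of $K_\rho\,\rho=\mu\,\rho$ then yields \eqref{Identity:LagrangeMultiplier}.

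Finally, to prove $\mu\le0$, I would combine \eqref{Identity:LagrangeMultiplier} with the virial identity: one has $\tr(H_\rho\,\rho)=\mathcal E_{\rm kin}[\rho]-\tr(V_\rho\,\rho)=-3\,\mathcal E_{\rm kin}[\rho]$ by \eqref{Identity:virial}, while $\tr(\beta'(\rho)\,\rho)=\sum_j\lambda_j\,\beta'(\lambda_j)\le3\,\sum_j\beta(\lambda_j)=3\,\tr\beta(\rho)$ by assumption ($\beta$3). Hence
\[
M\,\mu=\tr(H_\rho\,\rho)+T\,\tr(\beta'(\rho)\,\rho)\le-3\,\mathcal E_{\rm kin}[\rho]+3\,T\,\tr\beta(\rho)=3\,\FT[\rho]=3\,i_{M,T}\le0\;,
\]
where the penultimate identity again uses the virial rewriting $\FT[\rho]=T\,\tr\beta(\rho)-\mathcal E_{\rm kin}[\rho]$, and the last inequality follows from Proposition~\ref{sub-additivity}~(ii) together with $T\le T^*(M)$. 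The most delicate point in the argument is Step~2: one must justify that the admissible rank-two perturbations are sufficient to force every occupied $\psi_j$ to be a ground state of $K_\rho$ with the \emph{same} eigenvalue $\mu$, and one must carefully handle the action of $(\beta')^{-1}$ on $\ker\rho$ through the natural convention compatible with $\beta'(0)=0$. The spectral differentiation formula for $\tr\beta(\rho+t\,\sigma)$ and the admissibility check for the perturbations are, by contrast, routine.
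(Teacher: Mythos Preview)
Your treatment of the virial identity is the same as the paper's, and your derivation of the Euler--Lagrange relation is a legitimate variant: instead of the paper's convex interpolation $(1-t)\rho+t\sigma$ with an arbitrary $\sigma\in\A_M$, you use the rank-two, trace-zero perturbations $|\phi\rangle\langle\phi|-|\psi_j\rangle\langle\psi_j|$. Both routes lead to the same linearized problem and hence to \eqref{SCequation} and \eqref{Identity:LagrangeMultiplier}; your version is a bit more explicit about how the common value $\mu$ emerges as the bottom of the spectrum of $K_\rho=H_\rho+T\,\beta'(\rho)$.

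The gap is in your proof that $\mu\le0$. You invoke the inequality $\lambda\,\beta'(\lambda)\le 3\,\beta(\lambda)$, i.e.\ assumption~($\beta$3), but the Proposition only assumes ($\beta$1)--($\beta$2). What you have actually reproduced is the computation behind Lemma~\ref{pLessThan3}, which does require ($\beta$3) and yields the stronger bound $M\mu\le p(M)\,i_{M,T}$. Under ($\beta$1)--($\beta$2) alone that estimate is not available, and your chain of inequalities breaks at $\tr(\beta'(\rho)\,\rho)\le 3\,\tr\beta(\rho)$.

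The paper bypasses this by a purely spectral argument: the mean-field Hamiltonian $H_\rho=-\Delta-V_\rho$ has essential spectrum $[0,\infty)$, and $(\beta')^{-1}$ is strictly positive on $(0,\infty)$. If $\mu>0$, then $(\beta')^{-1}\big((\mu-H_\rho)/T\big)$ would be strictly positive on a portion of the essential spectrum of $H_\rho$ and could not be trace class, contradicting $\rho\in\mathfrak S_1$. This uses only ($\beta$1)--($\beta$2). You should replace your final paragraph by this argument (or add ($\beta$3) to your hypotheses, but then you are proving a weaker statement than the paper).
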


\begin{proof} Let $\rho\in\mathfrak M_M$ be a minimizer of $\FT$. Consider the decomposition given by \eqref{decom}. If we denote by $\rho_\lambda$ the density operator in $\A_M$ given by
\[
\rho_\lambda=\lambda^3\sum_{j\in\N}\lambda_j\,|\psi_j(\lambda\cdot)\rangle\langle \psi_j(\lambda\cdot)|\;,
\]
then, as in the proof of Proposition~\ref{sub-additivity}, we find that $\EH[\rho_\lambda]=\lambda^2\,\mathcal E_{\rm kin}[\rho]-\lambda\,\mathcal E_{\rm pot}[\rho]$ while $\s[\rho_\lambda]=\s[\rho]$ for any $\lambda>0$. Hence the condition $\frac d{d\lambda}\,\EH[\rho_\lambda]_{|\lambda=1}=0$ exactly amounts to $\mathcal E_{\rm pot}[\rho]=2\,\mathcal E_{\rm kin}[\rho]$. Next, let $\sigma\in\A_M$. Then $(1-t)\,\rho+t\,\sigma\in\A_M$ and
\[
t\mapsto \FT[(1-t)\,\rho+t\,\sigma ]
\]
has a minimum at $t=0$. Computing its derivative at $t=0$ and arguing by contradiction implies that $\rho$ also solves the linearized problem
\[
\inf_{\sigma\in\A_M}\tr\left((H_\rho +T\,\beta'(\rho))(\sigma-\rho)\right)\,.
\]
Computing the corresponding Euler-Lagrange equations shows that the minimizer of this problem is $\rho = (\beta')^{-1}\big((\mu-H_\rho)/T\big)$ where $\mu$ denotes the Lagrange multiplier associated to the constraint $\tr \rho = M$.
Since the essential spectrum of $H_\rho$ is $[0,\infty)$, we also get that $\mu \le 0$ since $\rho$ is trace class and $(\beta')^{-1}>0$ on $(0,\infty)$. \end{proof}

Using the decomposition \eqref{decom} we can rewrite the stationary Hartree model in terms of (at most) countably many eigenvalue problems coupled through a nonlinear Poisson equation
\[\label{stationary}
\left\{\begin{array}{l}
\Delta\,\psi_j + V_\rho\,\psi_j + \mu_j\,\psi_j = 0\;,\quad j\in\N\;,\\[6pt]
-\Delta V_\rho\,=\,4\pi \sum_{j\in\N}\lambda_j\,|\psi_j|^2\,,
\end{array}\right.\]
where $(\mu_j)_{j\in\N}\in\R$ denotes the sequence of the eigenvalues of $H_\rho$ and $\langle\psi_j,\psi_k\rangle_{L^2}=\delta_{j,k}$. 
The self-consistent equation \eqref{SCequation} consequently implies the following relation between the occupation numbers $(\lambda_j)_{j\in\N}$ and the eigenvalues $(\mu_j)_{j\in\N}$:
\begin{equation}\label{lambda}
\lambda_j=(\beta')^{-1}\left((\mu-\mu_j)/T\right)_+ ,
\end{equation}
where $s_+=(s+|s|)/2$ denotes the positive part of $s$. Upon reverting the relation \eqref{lambda} we obtain $\mu_j=\mu-T\,\beta'(\lambda_j)$ for any $\mu_j\le\mu$.  

\medskip The Lagrange multiplier $\mu$ is usually referred to as the \emph{chemical potential}. In the existence proof given below, it will be essential, that $\mu <0$. In order to show that this is indeed the case, 
let $p(M):=\sup_{m\in(0,M]}\frac{m\,\beta'(m)}{\beta(m)}$. If $\rho\in\A_M$, then
\[
\tr(\beta'(\rho)\,\rho)\le p(M)\,\tr\beta(\rho)\;.
\]
Notice that if ($\beta$3) holds, then $p(M)\le 3$.
\begin{lemma}\label{pLessThan3} Let $M>0$ and $T<T^*(M)$. Assume that $\rho\in\A_M$ is a minimizer of $\FT$ and let $\mu$ be the corresponding Lagrange multiplier. 
With the above notations, if $p(M)\le 3$, then $M\,\mu\le p(M)\,i_{M,T}<0$.\end{lemma}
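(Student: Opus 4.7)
The plan is to combine the explicit formula \eqref{Identity:LagrangeMultiplier} for $\mu$, the virial identity \eqref{Identity:virial}, the hypothesis $p(M)\le3$, and the fact that the free energy of $\rho$ is negative. All ingredients are already supplied by Propositions~\ref{sub-additivity} and \ref{E-L}; the argument is purely algebraic manipulation with one sign check.

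First, I would start from \eqref{Identity:LagrangeMultiplier}, writing
\[
M\,\mu = \tr(-\Delta\,\rho) - \tr(V_\rho\,\rho) + T\,\tr(\beta'(\rho)\,\rho)\;.
\]
By the virial identity \eqref{Identity:virial}, $\tr(V_\rho\,\rho) = 4\,\tr(-\Delta\,\rho)$, so that $\mathcal E_{\rm pot}[\rho]=2\,\mathcal E_{\rm kin}[\rho]$ and therefore $\EH[\rho] = -\,\mathcal E_{\rm kin}[\rho] = -\tr(-\Delta\,\rho)$. This gives
\[
M\,\mu = -3\,\tr(-\Delta\,\rho) + T\,\tr(\beta'(\rho)\,\rho) = 3\,\EH[\rho] + T\,\tr(\beta'(\rho)\,\rho)\;.
\]

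Next, by definition of $p(M)$ and the spectral decomposition \eqref{decom}, we have the pointwise inequality $\lambda_j\,\beta'(\lambda_j)\le p(M)\,\beta(\lambda_j)$ for every $j$ (with $\lambda_j\in(0,M]$), whence $\tr(\beta'(\rho)\,\rho)\le p(M)\,\tr\beta(\rho) = -\,p(M)\,\mathcal S[\rho]$. Substituting gives
\[
M\,\mu \le 3\,\EH[\rho] - p(M)\,T\,\mathcal S[\rho]\;.
\]
At this point I would rewrite the right-hand side as $p(M)\,\FT[\rho] + (3-p(M))\,\EH[\rho]$, so that the conclusion follows once we know that $(3-p(M))\,\EH[\rho]\le 0$.

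The only thing left to check --- this is the step I would flag as the one to be careful with --- is the sign of $\EH[\rho]$. By assumption $T<T^*(M)$, so $\FT[\rho]=i_{M,T}<0$ by Proposition~\ref{sub-additivity}(ii). Since $\beta\ge0$ on $[0,\infty)$ (a consequence of ($\beta$1)--($\beta$2): strict convexity with $\beta(0)=\beta'(0)=0$ forces $\beta\ge0$ on $[0,\infty)$), we have $\mathcal S[\rho]=-\tr\beta(\rho)\le 0$, hence $-\,T\,\mathcal S[\rho]\ge 0$ and therefore
\[
\EH[\rho] \le \EH[\rho] - T\,\mathcal S[\rho] = \FT[\rho] = i_{M,T} < 0\;.
\]
Combined with the hypothesis $p(M)\le 3$, this yields $(3-p(M))\,\EH[\rho]\le 0$. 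Consequently
\[
M\,\mu \le p(M)\,\FT[\rho] + (3-p(M))\,\EH[\rho] \le p(M)\,i_{M,T}\;,
\]
and the strict negativity $p(M)\,i_{M,T}<0$ follows from $i_{M,T}<0$ together with $p(M)>0$ (which is automatic since strict convexity of $\beta$ with $\beta(0)=\beta'(0)=0$ implies $\beta(m),\beta'(m)>0$ for $m>0$). This completes the proof outline.
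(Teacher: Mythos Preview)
Your proof is correct and follows essentially the same route as the paper's: combine \eqref{Identity:LagrangeMultiplier}, the virial identity \eqref{Identity:virial}, and the inequality $\tr(\beta'(\rho)\,\rho)\le p(M)\,\tr\beta(\rho)$ to obtain $p(M)\,i_{M,T}-M\,\mu\ge 0$. The only minor difference is that where you establish $(3-p(M))\,\EH[\rho]\le 0$ via the detour $\EH[\rho]\le\FT[\rho]=i_{M,T}<0$, the paper simply observes that $(3-p(M))\,\tr(-\Delta\,\rho)\ge 0$ directly from non-negativity of the kinetic energy; since $\EH[\rho]=-\tr(-\Delta\,\rho)$ by virial, these are the same fact, and your additional appeal to $i_{M,T}<0$ is not needed at that step (though it is of course still needed for the final strict inequality).
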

\begin{proof} By definition of $i_{M,T}$ and according to \eqref{Identity:LagrangeMultiplier}, we know that
\begin{eqnarray*}
&&i_{M,T}=\tr\left(-\Delta\,\rho-\tfrac 12\,V_\rho\,\rho+T\,\beta(\rho)\right)\;,\\
&&M\,\mu=\tr\left(-\Delta\,\rho-V_\rho\,\rho+T\,\beta'(\rho)\,\rho\right)\;.
\end{eqnarray*}
Using \eqref{Identity:virial}, we end up with the identity
\[
p(M)\,i_{M,T}-M\,\mu=(3-p(M))\,\tr(-\Delta\,\rho)+T\,\tr\left(p(M)\,\beta(\rho)-\beta'(\rho)\,\rho\right)\ge 0\;,
\]
which concludes the proof.\end{proof}

The negativity of the Lagrange multiplier $\mu$, is straightforward in the zero temperature case. In our situation it holds under Assumption ($\beta$3), 
but has not been established for instance for $\beta(s)=s^p$ with $p>3$. In fact, it might even be false in some cases, see Section~\ref{conclusion} for more details.

\begin{corollary}\label{StrictMonotonicity} Let $T>0$. Then $M\mapsto i_{M,T}$ is monotone decreasing as long as $T<T^*(M)$ and $p(M)\le3$.\end{corollary}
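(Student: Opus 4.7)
The plan is to upgrade the monotone non-increasing property of $M\mapsto i_{M,T}$ already established in Proposition~\ref{sub-additivity} to a strict decrease by means of a local first-order argument at a minimizer. The natural perturbation is the scaling $\rho\mapsto(1+s)\,\rho$, which maps $\A_M$ into $\A_{(1+s)M}$ without modifying the eigenfunctions of $\rho$; its virtue is that the derivative of $\FT$ along this curve can be identified with $M$ times the Lagrange multiplier via \eqref{Identity:LagrangeMultiplier}.

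Fix $M$ in the admissible interval $\{M>0\,:\,T<T^*(M)\text{ and }p(M)\le3\}$ and let $\rho\in\mathfrak M_M$ be a minimizer of $\FT$, whose existence is granted by Theorem~\ref{theo}(i). Setting $\rho_s:=(1+s)\,\rho\in\A_{(1+s)M}$ and using $n_{\rho_s}=(1+s)\,n_\rho$, one finds $\EH[\rho_s]=(1+s)\,\mathcal E_{\rm kin}[\rho]-(1+s)^2\,\Q[\rho]$ and $\s[\rho_s]=-\tr\beta((1+s)\rho)$. Differentiating at $s=0$ and then recognizing the resulting expression via \eqref{Identity:LagrangeMultiplier} yields
\[
\frac{\D}{\D s}\FT[\rho_s]\Big|_{s=0}=\mathcal E_{\rm kin}[\rho]-2\,\Q[\rho]+T\,\tr\big(\beta'(\rho)\,\rho\big)=\tr\big((H_\rho+T\,\beta'(\rho))\,\rho\big)=M\,\mu\,.
\]
By Lemma~\ref{pLessThan3}, $M\,\mu\le p(M)\,i_{M,T}<0$, so there exists $s_0=s_0(M)>0$ such that
\[
i_{(1+s)M,T}\le\FT[\rho_s]<\FT[\rho]=i_{M,T}\qquad\forall\;s\in(0,s_0]\,.
\]

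To convert this local strict decrease into the global statement, let $M_1<M_2$ both belong to the admissible interval (it is genuinely an interval, since both $M\mapsto T^*(M)$ and $M\mapsto p(M)$ are non-decreasing). Choosing $s\in(0,s_0(M_1)]$ small enough that $(1+s)\,M_1\le M_2$ and combining the strict inequality at $M_1$ with the non-increasing property from Proposition~\ref{sub-additivity} gives
\[
i_{M_2,T}\le i_{(1+s)M_1,T}<i_{M_1,T}\,,
\]
which is the desired strict monotonicity. The only delicate ingredient is the strict negativity of the scaling derivative, i.e.\ of $\mu$: without Lemma~\ref{pLessThan3}---and hence without Assumption ($\beta$3) entering through $p(M)\le3$---the first-order variation could vanish or even be non-negative, as is indeed suspected to happen for some polytropic exponents $p>3$ as noted in Section~\ref{conclusion}.
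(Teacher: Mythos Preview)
Your argument is circular. You invoke Theorem~\ref{theo}(i) to obtain a minimizer $\rho\in\mathfrak M_M$, but Theorem~\ref{theo}(i) is proved via Proposition~\ref{theoexistence}, and the dichotomy step in the proof of Proposition~\ref{theoexistence} explicitly relies on Corollary~\ref{StrictMonotonicity} (to rule out the case $i_{M-M^{(1)},T}=0$). The Remark immediately following the Corollary in the paper flags exactly this point: ``at this stage, the existence of a minimizer is not granted and we thus had to argue differently.'' Likewise, your appeal to Lemma~\ref{pLessThan3} for $\mu<0$ presupposes a genuine minimizer, so it does not survive once you drop that assumption.

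The paper's fix is close in spirit to what you wrote but avoids the circularity: one takes an \emph{approximate} minimizer $\rho\in\A_M$ with $\FT[\rho]\le i_{M,T}+\varepsilon$, replaces it by a dilation so that the virial-type identity $\mathcal E_{\rm pot}[\rho]=2\,\mathcal E_{\rm kin}[\rho]$ holds (this only decreases $\FT$), and then carries out the very computation of Lemma~\ref{pLessThan3} with $i_{M,T}$ replaced by $i_{M,T}+\varepsilon$. This gives a uniform bound $M\,\mu[\rho]\le p(M)\,(i_{M,T}+\varepsilon)$, hence $M\,\mu[\rho]<i_{M,T}/2<0$ for $\varepsilon$ small, uniformly in the choice of $\rho$. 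From there your local-to-global step via $\rho_s=(1+s)\rho$ and Proposition~\ref{sub-additivity}(ii) goes through unchanged.
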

\begin{proof} 
Let $\rho\in\A_M$ be such that $\FT[\rho]\le i_{M,T}+\varepsilon$, for some $\varepsilon>0$ to be chosen. With no restriction, we can assume that 
$\mathcal E_{\rm pot}[\rho]=2\,\mathcal E_{\rm kin}[\rho]$ and define $\mu[\rho]:=\frac d{d\lambda}\,\FT[\lambda\,\rho]_{ | \lambda=1}$. The same computation as in the proof of Lemma~\ref{pLessThan3} shows that
\[
p(M)\,(i_{M,T}+\varepsilon)-M\,\mu\ge(3-p(M))\,\tr(-\Delta\,\rho)+T\,\tr\left(p(M)\,\beta(\rho)-\beta'(\rho)\,\rho\right)\ge 0\;,
\]
since, by assumption, $p(M)\le 3$. This proves that $M\,\mu[\rho]<i_{M,T}/2<0$ for any $\varepsilon\in(0,|i_{M,T}|/2)$, if $p(M)\le 3$. This bound being uniform with respect to $\rho$, monotonicity easily follows.\end{proof}

\begin{remark} Under the assumptions of Lemma~\ref{pLessThan3}, we observe that $$\frac d{d\lambda}\,\FT[\lambda\,\rho]_{|\lambda=1}=\mu\,M<0,$$ provided $p(M)\le 3$ and $\rho \in \mathfrak H_M$, 
which proves the strict monotonicity of $M\mapsto i_{M,T}$. However, at this stage, the existence of a minimizer is not granted and we thus had to argue differently.\end{remark}

\section{The binding inequality}\label{sec:a-priori}

In this section we shall strengthen the result of Proposition \ref{sub-additivity} (i) and infer a \emph{strict} sub-additivity property of $i_{M,T}$, which is usually called the \emph{binding inequality}; see e.g.~\cite{LeLe}. This will appear as a consequence of the following a priori estimate for the spatial density of the minimizers.
\begin{proposition}\label{decayestimate} Let $\rho\in\A_M$ be a minimizer of $\FT$. There exists a positive constant~$C$ such that, for all $R>0$ sufficiently large,
\[\label{decay}
\int_{|x|>R}n_\rho(x)\;\D x \leq \frac{C}{R^2}\;.
\]\end{proposition}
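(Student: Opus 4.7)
The plan is to exploit the Euler-Lagrange characterization obtained in Proposition \ref{E-L}, together with the strict negativity of the chemical potential $\mu$ supplied by Lemma \ref{pLessThan3} under assumption ($\beta$3). In the spectral decomposition $\rho=\sum_j\lambda_j|\psi_j\rangle\langle\psi_j|$, the self-consistent equation \eqref{SCequation} and the occupation rule \eqref{lambda} force $\mu_j<\mu<0$ whenever $\lambda_j>0$, so every eigenfunction $\psi_j$ that contributes to $\rho$ is bound with eigenvalue at most $\mu$, hence $|\mu_j|\ge|\mu|>0$. Because $V_\rho$ decays at infinity, I expect each such $\psi_j$ to decay in a weighted $L^2$-sense at a rate controlled by $|\mu|$, and the desired bound on $n_\rho$ will then follow by weighted summation.

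First I would verify that $\e(R):=\sup_{|x|>R}V_\rho(x)\to 0$ as $R\to\infty$. Since $\sqrt{n_\rho}\in H^1(\R^3)$ by Lemma \ref{Lem:Bound}, we have $n_\rho\in L^1(\R^3)\cap L^3(\R^3)$, and for $|x|$ large the splitting
\[
V_\rho(x)=\int_{|y|<|x|/2}\frac{n_\rho(y)}{|x-y|}\,\D y+\int_{|y|\ge|x|/2}\frac{n_\rho(y)}{|x-y|}\,\D y
\]
bounds the first piece by $2M/|x|$, while the second tends to zero uniformly by a Hardy-Littlewood-Sobolev estimate applied to $n_\rho\,\mathbf 1_{|y|\ge|x|/2}$, whose $L^1\cap L^3$ norms vanish as $|x|\to\infty$ by dominated convergence.

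The core step is a standard IMS/Agmon-type weighted energy identity. For each index $j$ with $\lambda_j>0$, multiplying $H_\rho\psi_j=\mu_j\psi_j$ by $\xi_R^2\,\bar\psi_j$ (with $\xi_R$ from \eqref{cutoff}) and integrating by parts gives
\[
\int_{\R^3}|\nabla(\xi_R\psi_j)|^2\,\D x+|\mu_j|\int_{\R^3}\xi_R^2|\psi_j|^2\,\D x=\int_{\R^3}\xi_R^2\,V_\rho\,|\psi_j|^2\,\D x+\int_{\R^3}|\nabla\xi_R|^2|\psi_j|^2\,\D x.
\]
Dropping the nonnegative kinetic term, using $V_\rho\le\e(R)$ on the support of $\xi_R$, $|\nabla\xi_R|^2\le C/R^2$, the normalization $\|\psi_j\|_{L^2}=1$, and $|\mu_j|\ge|\mu|$, I would arrive at
\[
\bigl(|\mu|-\e(R)\bigr)\int_{\R^3}\xi_R^2|\psi_j|^2\,\D x\le\frac{C}{R^2}.
\]
For $R$ large enough that $\e(R)\le|\mu|/2$, this yields $\int\xi_R^2|\psi_j|^2\,\D x\le 2C/(|\mu|R^2)$, a bound \emph{uniform in $j$}. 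Multiplying by $\lambda_j$ and summing, using $\sum_j\lambda_j=M$ together with $\mathbf 1_{\{|x|>2R\}}\le\xi_R^2$, gives
\[
\int_{|x|>2R}n_\rho(x)\,\D x\le\int_{\R^3}\xi_R^2\,n_\rho\,\D x\le\frac{2CM}{|\mu|\,R^2},
\]
which is the claimed estimate after relabeling $R$.

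The load-bearing hypothesis is the strict negativity $\mu<0$: were $\mu$ to vanish, the coercivity constant $|\mu|-\e(R)$ would collapse and no polynomial decay could be extracted from this argument. That $\mu<0$ is precisely the content of Lemma \ref{pLessThan3} under ($\beta$3). The auxiliary decay of $V_\rho$ at infinity is routine but must be handled directly, since $n_\rho$ has no a priori compact support.
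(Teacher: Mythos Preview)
Your argument is correct. It differs from the paper's proof in organization rather than in substance, but the difference is worth noting.

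The paper works at the operator level: it exploits that $\rho$ minimizes the \emph{linearized} functional $\mathcal L^\mu[\sigma]=\tr\big[(H_\rho-\mu+T\,\beta'(\rho))\sigma\big]$ over all $\sigma\in\A$, and compares $\mathcal L^\mu[\rho]$ with $\mathcal L^\mu[\chi_R\rho\chi_R]$ via the IMS identity of Lemma~\ref{Lem:Truncation}. The decay of the interaction term is obtained through Lemma~\ref{Lem:UnifPot}, and the inequality $\mathcal L^\mu[\rho]\le\mathcal L^\mu[\chi_R\rho\chi_R]$ then forces $(\mu+o(1))\int\xi_R^2\,n_\rho\,\D x+C/R^2\ge0$.

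You instead work eigenfunction by eigenfunction, using that \eqref{SCequation} makes $\rho$ a function of $H_\rho$, so each $\psi_j$ with $\lambda_j>0$ is an eigenfunction of $H_\rho$ with eigenvalue $\mu_j<\mu<0$. The localized virial identity you write is exactly the Agmon-type weighted estimate, and the uniform spectral gap $|\mu_j|\ge|\mu|$ gives a bound on $\int\xi_R^2|\psi_j|^2$ that is independent of $j$; summing against $\lambda_j$ recovers the density estimate. This is more elementary and makes the role of the gap $|\mu|$ completely explicit. The paper's version has the minor advantage of never unpacking the spectral decomposition, which keeps the argument closer to the functional-analytic framework used elsewhere in the paper. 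Both routes hinge on the same two inputs: $\mu<0$ from Lemma~\ref{pLessThan3}, and $V_\rho(x)\to0$ at infinity. One terminological quibble: the pointwise decay of the second piece of $V_\rho$ is a H\"older/Young splitting (as in the proof of Lemma~\ref{Lem:UnifPot}) rather than a Hardy--Littlewood--Sobolev estimate, but the conclusion is correct.
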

This result is the analog of \cite[Lemma 5.2]{LeLe}. For completeness, we shall give the details of the proof, which requires $\mu < 0$, in the appendix. The following elementary estimate will be useful in the sequel.
\begin{lemma}\label{Lem:UnifPot} There exists a positive constant~$C$ such that, for any $\rho\in\A_M$,
\[
\int_{\R^3}\frac{n_\rho(x)}{|x|}\;\D x\le C\,M^{3/2}\,\left(\tr(-\Delta\,\rho)\right)^{1/2}\,.
\]\end{lemma}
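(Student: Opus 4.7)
The estimate is a standard one-body Coulomb potential bound, and my plan is to obtain it in two elementary steps by combining the Cauchy--Schwarz inequality with the classical Hardy inequality on $\R^3$. The mass constraint $\int n_\rho\,\D x = M$ will enter through the first step, and the kinetic energy $\tr(-\Delta\,\rho)$ through the second.

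First, I would view $n_\rho(x)\,\D x$ as a nonnegative measure and split $1/|x|$ symmetrically to apply Cauchy--Schwarz:
\[
\int_{\R^3}\frac{n_\rho(x)}{|x|}\,\D x \;=\; \int_{\R^3}\sqrt{n_\rho(x)}\cdot\frac{\sqrt{n_\rho(x)}}{|x|}\,\D x \;\le\; \left(\int_{\R^3} n_\rho\,\D x\right)^{1/2}\left(\int_{\R^3}\frac{n_\rho(x)}{|x|^2}\,\D x\right)^{1/2}.
\]
The first factor equals $M^{1/2}$ by \eqref{mass}, so matters reduce to a Hardy-type control on $\int n_\rho/|x|^2\,\D x$.

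Second, I would use the spectral decomposition \eqref{decom}, so that $\psi_j\in H^1(\R^3)$ whenever $\lambda_j>0$ and $n_\rho=\sum_{j\in\N}\lambda_j|\psi_j|^2$, and apply the classical Hardy inequality $\int_{\R^3}|\psi|^2/|x|^2\,\D x \le 4\int_{\R^3}|\nabla\psi|^2\,\D x$ term by term with nonnegative weights $\lambda_j$:
\[
\int_{\R^3}\frac{n_\rho(x)}{|x|^2}\,\D x \;=\; \sum_{j\in\N}\lambda_j\int_{\R^3}\frac{|\psi_j(x)|^2}{|x|^2}\,\D x \;\le\; 4\sum_{j\in\N}\lambda_j\|\nabla\psi_j\|_{L^2}^2 \;=\; 4\,\tr(-\Delta\,\rho).
\]
Combining the two steps produces a bound of the claimed form, with the natural power of $M$ coming out of the Cauchy--Schwarz step.

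I do not anticipate any real obstacle here: the only nontrivial ingredient is Hardy's inequality, which passes from pure states to mixed states by linearity. A completely parallel route would be to split the integral at a radius $R>0$, dominate the exterior piece by $M/R$, and bound the interior piece via Hölder's inequality ($L^3\times L^{3/2}$) together with the Hoffmann--Ostenhof / Sobolev estimate $\|n_\rho\|_{L^3}\le C\,\tr(-\Delta\,\rho)$ already implicit in the proof of Lemma~\ref{Lem:Bound}, and then optimize in $R$; this produces the same kind of bound and has the cosmetic advantage of mirroring the splitting technique used elsewhere in the paper.
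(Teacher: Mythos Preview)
Your argument via Cauchy--Schwarz followed by Hardy's inequality is correct and short, but it is not the route the paper takes. The paper splits $\int_{\R^3}n_\rho/|x|\,\D x$ at a radius $R$, applies H\"older's inequality on each piece (with exponent $p=3$ on $B_R$ and $p=6/5$ on $B_R^c$), optimizes over $R$, interpolates $\|n_\rho\|_{L^{6/5}}$ between $L^1$ and $L^3$, and finally bounds $\|n_\rho\|_{L^3}$ by $\tr(-\Delta\,\rho)$ via Sobolev's embedding---in other words, precisely the ``parallel route'' you sketch in your closing paragraph. Your Hardy-based proof is more elementary and delivers an explicit constant, while the paper's method mirrors the Hardy--Littlewood--Sobolev style estimate \eqref{potentialestimate} used earlier.

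One point to flag: your proof actually yields $2\,M^{1/2}\big(\tr(-\Delta\,\rho)\big)^{1/2}$, with exponent $1/2$ on $M$ rather than the $3/2$ in the stated lemma. This is in fact the homogeneity-correct exponent (under $n_\rho\mapsto\alpha\,n_\rho$ the left side scales like $\alpha$ while $M^{3/2}(\tr(-\Delta\,\rho))^{1/2}$ scales like $\alpha^2$), so your inequality is the sharp form and the paper's own computation, carried out carefully, also gives $1/2$. The discrepancy is harmless for the applications in the paper, which only use the bound at fixed mass.
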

\begin{proof} Up to a translation, we have to estimate $\int_{\R^3}|x|^{-1}\,n_\rho(x)\,\D x$ and it is convenient to split the integral into two integrals corresponding to $|x|\le R$ and $|x|>R$. By H\"older's inequality, we know that, for any $p>3/2$,
\[
\int_{B_R}\frac{n_\rho(x)}{|x|}\;\D x\le \left(4\pi\,\tfrac{p-1}{2p-3}\right)^{(p-1)/p}\,\norm{n_\rho}_{L^p}\,R^\frac{2p-3}{p-1}\,,
\]
where $B_R$ denotes the centered ball of radius $R$. Similarly, for any $p<3/2$,
\[
\int_{B_R^c}\frac{n_\rho(x)}{|x|}\;\D x\le \left(4\pi\,\tfrac{p-1}{3-2p}\right)^{(p-1)/p}\,\norm{n_\rho}_{L^p}\,R^{-\frac{2p-3}{p-1}}\,.
\]
Applying these two estimates with, for instance, $p=3$ and $p=6/5$ and optimizing w.r.t.~$R>0$, we obtain a limiting case for the Hardy-Littlewood-Sobolev inequalities after using again H\"older's inequality to estimate $\norm{n_\rho}_{L^{6/5}}$ in terms of $\norm{n_\rho}_{L^1}$ and $\norm{n_\rho}_{L^3}$:
\[
\int_{\R^3}\frac{n_\rho(x)}{|x|}\;\D x\le C\,\norm{n_\rho}_{L^1}^{3/2}\,\norm{n_\rho}_{L^3}^{1/2}\,.
\]
We conclude as in \eqref{potentialestimate} using Sobolev's inequality to control $\norm{n_\rho}_{L^3}$ by $\tr(-\Delta\,\rho)$.\end{proof}

As a consequence of Proposition~\ref{decayestimate} and Lemma~\ref{Lem:UnifPot}, we obtain the following result.
\begin{corollary}[Binding inequality] \label{lbindinginequality} Let $M^{(1)}>0$ and $M^{(2)}>0$. If there are minimizers for $i_{M^{(1)},T}$ and $i_{M^{(2)},T}$, then
\[\label{bindinginequality}
i_{M^{(1)}+M^{(2)},T}<i_{M^{(1)},T}+i_{M^{(2)},T}\;.
\]
\end{corollary}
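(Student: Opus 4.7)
The overall strategy is to exhibit a trial density operator of total mass $M := M^{(1)} + M^{(2)}$ whose free energy is strictly below $i_{M^{(1)},T} + i_{M^{(2)},T}$. The idea is standard: place truncated copies of the two minimizers spatially far apart, so that the attractive cross Coulomb interaction, of order $1/L$ in the separation distance $L$, strictly beats the truncation error, which is controllable by Proposition~\ref{decayestimate} at a rate faster than $1/L$.

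Let $\rho^{(i)} \in \mathfrak M_{M^{(i)}}$ be minimizers for $i=1,2$, both guaranteed to exist by assumption, so the tail estimate of Proposition~\ref{decayestimate} applies to each. First I truncate: define $\tilde\rho^{(i)} := \chi_R\,\rho^{(i)}\,\chi_R \in \A$, whose mass satisfies $\tilde M_i := \tr\tilde\rho^{(i)} = \int\chi_R^2\,n_{\rho^{(i)}}\,\D x \in [M^{(i)} - C/R^2, M^{(i)}]$. By the IMS identity \eqref{IMS}, $\mathcal E_{\rm kin}[\tilde\rho^{(i)}] \le \mathcal E_{\rm kin}[\rho^{(i)}] + O(R^{-2})$. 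By the Brown--Kosaki inequality used in Lemma~\ref{Lem:Truncation}, $\tr\beta(\tilde\rho^{(i)}) \le \tr\beta(\rho^{(i)})$. For the potential part, writing $n_\rho - n_{\tilde\rho} = \xi_R^2\,n_\rho$ and using Hardy--Littlewood--Sobolev together with the tail bound of Proposition~\ref{decayestimate}, one gets $\mathcal E_{\rm pot}[\rho^{(i)}] - \mathcal E_{\rm pot}[\tilde\rho^{(i)}] = O(R^{-\gamma})$ for some $\gamma > 1$. All together,
\[
\FT[\tilde\rho^{(i)}] \le i_{M^{(i)},T} + O(R^{-\gamma'}), \qquad \gamma' := \min(2,\gamma) > 1.
\]

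Next I separate: fix a unit vector $\mathsf e \in \mathbb S^2$ and set $L = 5R$. Define the translated operator $\hat\rho^{(2)} := \tau^*_{L\mathsf e}\,\tilde\rho^{(2)}\,\tau_{L\mathsf e}$. The range of $\tilde\rho^{(1)}$ is contained in functions supported in $B(0,2R)$, and that of $\hat\rho^{(2)}$ in $B(L\mathsf e,2R)$; since $L > 4R$, these subspaces are orthogonal. Let $\rho := \tilde\rho^{(1)} + \hat\rho^{(2)} \in \A$, with $\tr\rho = \tilde M_1 + \tilde M_2 \le M$. The orthogonality of the ranges gives $\tilde\rho^{(1)}\,\hat\rho^{(2)} = \hat\rho^{(2)}\,\tilde\rho^{(1)} = 0$; combined with $\beta(0)=0$ this yields $\tr\beta(\rho) = \tr\beta(\tilde\rho^{(1)}) + \tr\beta(\hat\rho^{(2)})$. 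Kinetic energy adds by translation invariance. The potential energy splits as $\mathcal E_{\rm pot}[\tilde\rho^{(1)}] + \mathcal E_{\rm pot}[\hat\rho^{(2)}] + I_{\rm cross}$ with
\[
I_{\rm cross} = \iint_{\R^3\times\R^3} \frac{n_{\tilde\rho^{(1)}}(x)\,n_{\hat\rho^{(2)}}(y)}{|x-y|}\,\D x\,\D y \ge \frac{\tilde M_1\,\tilde M_2}{9R}.
\]

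Putting everything together,
\[
\FT[\rho] \le \FT[\tilde\rho^{(1)}] + \FT[\tilde\rho^{(2)}] - I_{\rm cross} \le i_{M^{(1)},T} + i_{M^{(2)},T} + O(R^{-\gamma'}) - \frac{M^{(1)}M^{(2)}}{10\,R}.
\]
Since $M \mapsto i_{M,T}$ is non-increasing (Proposition~\ref{sub-additivity}) and $\tr\rho \le M$, we have $i_{M,T} \le \FT[\rho]$. As $\gamma' > 1$, for $R$ sufficiently large the attractive term $-M^{(1)}M^{(2)}/(10R)$ strictly dominates the truncation correction, yielding the strict inequality. The main obstacle is the quantitative control of the truncation error on the potential energy: this is where Proposition~\ref{decayestimate} is essential, since we need the tail $\int_{|x|>R} n_{\rho^{(i)}}\,\D x$ to decay strictly faster than $1/R$, so that the $O(1/R)$ gain from the cross interaction actually beats the cumulative loss from the three energy components.
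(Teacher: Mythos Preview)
Your proof is correct and follows essentially the same route as the paper's: truncate each minimizer with $\chi_R$, use Lemma~\ref{Lem:Truncation} for the kinetic and entropy pieces, Proposition~\ref{decayestimate} to control the potential-energy truncation error, translate one piece by $5R\mathsf e$, and then exploit the $O(1/R)$ attractive cross term together with the monotonicity of $M\mapsto i_{M,T}$ from Proposition~\ref{sub-additivity}. The only cosmetic difference is that the paper makes the potential-energy estimate explicit as $O(R^{-2})$ via Lemma~\ref{Lem:UnifPot} (giving $\|V_{\rho^{(i)}}\|_{L^\infty}\le C$ and hence the loss is bounded by a constant times $\int_{|x|>R}n_{\rho^{(i)}}\,\D x$), whereas you leave the exponent as ``some $\gamma>1$''.
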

\begin{proof} Consider two minimizers $\rho^{(1)}$ and $\rho^{(2)}$ for $i_{M^{(1)},T}$ and $i_{M^{(2)},T}$ respectively and let $\chi_R$ be the cut-off function given in \eqref{cutoff}. By Lemma~\ref{Lem:Truncation} we have
\[
\tr(-\Delta\,(\chi_R\,\rho^{(\ell)}\,\chi_R))\le\tr(-\Delta\,\rho^{(\ell)})+O(R^{-2})\quad\mbox{and}\quad\tr\beta(\chi_R\,\rho^{(\ell)}\,\chi_R)\le\tr\beta(\rho^{(\ell)})\;.
\]
To handle the potential energies, we observe that
\begin{multline*}
\left|\;\Q[\chi_R\,\rho^{(\ell)}\,\chi_R]-\Q[\rho^{(\ell)}]\;\right|\leq\iint_{\R^3\times\R^3}\frac{(1-\chi_R^2(x)\,\chi_R^2(y))\,n_{\rho^{(\ell)}}(x)\,n_{\rho^{(\ell)}}(y)}{|x-y|}\;\D x\,\D y\\
\leq\iint_{\{|x|\geq R\}\times\{|y|\geq R\}}\frac{n_{\rho^{(\ell)}}(x)\,n_{\rho^{(\ell)}}(y)}{|x-y|}\;\D x\,\D y\;.
\end{multline*}
Using Lemma \ref{decayestimate} and Lemma~\ref{Lem:UnifPot}, we obtain
\[
\left|\;\Q[\chi_R\,\rho^{(\ell)}\,\chi_R]-\Q[\rho^{(\ell)}]\;\right|\leq C\,\left[\tr(-\Delta\, \rho^{(\ell)})\right]^{1/2}\int_{|x|\geq R}n_{\rho^{(\ell)}}(x)\;\D x\leq O(R^{-2})
\]
for $R>0$ large enough. This shows that, for any $R>0$ sufficiently large
\[
\FT[\chi_R\,\rho^{(\ell)}\,\chi_R]\le i_{M^{(\ell)},T}+O(R^{-2})\quad \mbox{ for}\;\ell=1,2.
\]
Consider now the test state
\[
\rho_R:=\chi_R\,\rho^{(1)}\,\chi_R+\tau_{5R\mathsf e}^*\,\chi_R\,\rho^{(2)}\,\chi_R\,\tau_{5R\mathsf e}
\]
for some unit vector $\mathsf e\in\mathbb S^2$. Since $\norm{n_{\rho_R}}_{L^1}\le M^{(1)}+M^{(2)}$, by monotonicity of $M\mapsto i_{M,T}$ (see Proposition~\ref{sub-additivity} (ii)), we get
\begin{multline*}
i_{M^{(1)}+M^{(2)},T}\le\FT[\rho_R]\le\FT[\chi_R\,\rho^{(1)}\,\chi_R]+\FT[\chi_R\,\rho^{(2)}\,\chi_R]-\frac{M^{(1)}M^{(2)}}{9\,R}\\\le i_{M^{(1)},T}+i_{M^{(2)},T}+\frac{C}{R^2} -\frac{M^{(1)}M^{(2)}}{9\,R}
\end{multline*}
for some positive constant $C$, which yields the desired result for $R$ sufficiently large.
\end{proof}

\section{Existence of minimizers below $T^*$}\label {sec:exist}


By a classical result, see e.g.~\cite[Corollary 4.1]{LeLe}, conservation of mass along a weakly convergent minimizing sequence implies that the sequence strongly converges. More precisely, we have the following statement.
\begin{lemma}\label{strong} Let $(\rho_k)_{k\in\N}\in\A_M$ be a minimizing sequence for~$\FT$, such that $\rho_k \rightharpoonup\rho$ weak$-\ast$ in $\A$ and $n_{\rho_k} \to n_\rho$ almost everywhere as $k\to\infty$. Then $\rho_k \rightarrow\rho$ strongly in $\A$ if and only if $\tr\rho=M$.\end{lemma}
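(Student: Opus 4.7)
The forward ($\Rightarrow$) implication is immediate: if $\rho_k\to\rho$ strongly in $\A$, then $\tr\rho_k\to\tr\rho$, so $\tr\rho=M$. The substance is in the converse. Assuming $\tr\rho=M$, my plan is to pass to the limit in each of the three pieces of $\FT[\rho_k]=\mathcal E_{\rm kin}[\rho_k]-\Q[\rho_k]+T\,\tr\beta(\rho_k)$, argue that $\rho$ is itself a minimizer, and conclude that the lower semi-continuous inequalities I collect along the way are all equalities. From the three individual convergences I will then recover norm convergence $\|\rho_k\|_\A\to\|\rho\|_\A$, and the weak-$\ast$ hypothesis upgrades to strong convergence.

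First I would establish trace-norm convergence $\rho_k\to\rho$ in $\mathfrak S_1$: since $\rho_k\ge 0$ and $\rho_k\rightharpoonup\rho$ weak-$\ast$, Fatou gives $\tr\rho\le\liminf\tr\rho_k=M$, and the hypothesis $\tr\rho=M$ together with the classical fact that for non-negative sequences in $\mathfrak S_1$, weak-$\ast$ plus convergence of the trace norm implies strong convergence, yields $\rho_k\to\rho$ in $\mathfrak S_1$. Second, I would show $\Q[\rho_k]\to\Q[\rho]$: the minimizing sequence has $\mathcal E_{\rm kin}[\rho_k]$ uniformly bounded (since $\FT$ is bounded from below on $\A_M$ by Lemma~\ref{Lem:Bound} and $\s[\rho_k]\le 0$ can be balanced by the mass bound), hence $n_{\rho_k}$ is uniformly bounded in $L^1\cap L^3$ by Sobolev. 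The a.e.\ convergence $n_{\rho_k}\to n_\rho$ and a Brezis-Lieb/Vitali argument then give strong convergence in $L^p$ for $p\in[1,3)$; Hardy-Littlewood-Sobolev then passes to the limit in $\Q$.

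Third, lower semi-continuity: $\mathcal E_{\rm kin}[\rho]\le\liminf\mathcal E_{\rm kin}[\rho_k]$ holds because $\sqrt{-\Delta}\,\rho_k\,\sqrt{-\Delta}\rightharpoonup\sqrt{-\Delta}\,\rho\,\sqrt{-\Delta}$ weak-$\ast$ in $\mathfrak S_1$, and $\tr\beta(\rho)\le\liminf\tr\beta(\rho_k)$ follows from the convexity of $\beta$ together with ($\beta$1)--($\beta$2), via Fatou applied to the spectral decompositions. In particular, $\rho\in\A_M$. Combining the three steps,
\[
\FT[\rho]\le\liminf_{k\to\infty}\FT[\rho_k]=i_{M,T},
\]
and since $\rho\in\A_M$ we must have equality $\FT[\rho]=i_{M,T}$. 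Consequently the inequalities in the previous step are both equalities, which in particular forces $\tr(\sqrt{-\Delta}\,\rho_k\,\sqrt{-\Delta})\to\tr(\sqrt{-\Delta}\,\rho\,\sqrt{-\Delta})$. Applying again the weak-$\ast$ + trace-norm criterion to the non-negative sequence $\sqrt{-\Delta}\,\rho_k\,\sqrt{-\Delta}$, and combining with the previously established $\mathfrak S_1$-convergence of $\rho_k$, yields $\rho_k\to\rho$ in $\A$.

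The main delicate point I would expect is the lower semi-continuity of $\rho\mapsto\tr\beta(\rho)$ along weak-$\ast$ convergent sequences, which is not entirely elementary and requires exploiting the convexity of $\beta$ together with $\beta(0)=0$ and $\beta\ge 0$ on $[0,1]$ to reduce to a Fatou-type statement on the eigenvalue sequences; a clean argument here is what makes the proof work, all the other ingredients being standard.
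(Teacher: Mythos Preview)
Your proof is correct and follows essentially the same strategy as the paper's: lower semi-continuity of the kinetic and entropy terms, continuity of the potential energy $\Q$ via Hardy--Littlewood--Sobolev and a Brezis--Lieb argument on the densities, and then the observation that $\rho\in\A_M$ forces equality throughout, hence convergence of the kinetic trace and strong convergence in $\A$. The paper's version is a terse sketch invoking the Brezis--Lieb splitting for both the $L^1$-density and the kinetic energy and mentions the monotonicity of $M\mapsto i_{M,T}$ (which you do not need, since you treat the forward implication separately and assume $\tr\rho=M$ for the converse); your explicit use of the ``weak-$\ast$ plus trace-norm $\Rightarrow$ strong'' criterion for positive trace-class operators is simply a more detailed presentation of the same mechanism.
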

\begin{proof} The proof relies on a characterization of the compactness due to Brezis and Lieb (see~\cite{Brezis-Lieb} and \cite[Theorem 1.9]{Lieb-Loss}) from which it follows that
\begin{multline*}
\lim_{k\to\infty}\left(\int_{\R^3}n_{\rho_k}\;\D x -\int_{\R^3}|n_\rho-n_{\rho_k}|\;\D x\right)=\int_{\R^3}n_\rho\;\D x\\
\mbox{and}\quad\lim_{k\to\infty}\Big(\tr(-\Delta\, \rho)-\tr\big(-\Delta\,(\rho-\rho_k)\big)\Big)=\tr(-\Delta\, \rho)\;.
\end{multline*}
By semi-continuity of $\FT$, monotonicity of $M\mapsto i_{M,T}$ according to Proposition~\ref{sub-additivity} (ii) and compactness of the quadratic term in $\EH$, we conclude that $\lim_{k\to\infty}\tr(-\Delta\,(\rho-\rho_k))=0$ if and only if $\tr\rho=M$.\end{proof}


With the results of Section~\ref{sec:basic} in hand, we can now state an existence result for minimizers of $\FT$. To this end, consider a minimizing sequence $(\rho_n)_{n\in\N}$ for $\FT$ and recall that $(\rho_n)_{n\in\N}$ is said to be \emph{relatively compact up to translations} if there is a sequence $(a_n)_{n\in\N}$ of points in~$\R^3$ such that $\tau_{a_n}^*\,\rho_n\,\tau_{a_n}$ strongly converges as $n\to\infty$, up to the extraction of subsequences.

Clearly, the sub-additivity inequality given in Lemma \ref{sub-additivity} (i) is not sufficient to prove the compactness up to translations for $(\rho_n)_{n\in\N}$. More precisely, if \emph{equality} holds, then, as in the proof of Lemma \ref{sub-additivity}, one can construct a minimizing sequence that is \emph{not} relatively compact in $\A$ up to translations. This obstruction is usually referred to as \emph{dichotomy}, cf.~\cite{Li}. To overcome this difficulty, we shall rely on the strict sub-additivity of Corollary~\ref{lbindinginequality}, which, however, only holds for minimizers. This is the main difference with previous works on Hartree-Fock models. As we shall see, the main issue will therefore be to prove the convergence of two subsequences towards minimizers of mass smaller than $M$. 

\begin{proposition} \label{theoexistence} Assume that ($\beta$1)--($\beta$3) hold. Let $M>0$ and consider $T^*=T^*(M)$ defined by \eqref{Tstar}. For all \hbox{$T<T^*$}, there exists an operator $\rho$ in $\A_M$ such that $\FT[\rho]=i_{M,T}$. Moreover, every minimizing sequence $(\rho_n)_{n\in\N}$ for $i_{M,T}$ is relatively compact in $\A$ up to translations. \end{proposition}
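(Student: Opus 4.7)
The plan is to apply Lions' concentration--compactness principle to the sequence of densities $(n_{\rho_n})$ associated with a minimizing sequence $(\rho_n)\subset\A_M$ for $i_{M,T}$, and rule out the vanishing and dichotomy alternatives. The main obstacle is that the strict sub-additivity needed to exclude dichotomy is the binding inequality of Corollary~\ref{lbindinginequality}, which is itself conditional on the existence of minimizers at smaller masses; the argument therefore takes the form of a simultaneous bootstrap in which the profile decomposition produced by iterating the concentration--compactness analysis yields minimizers at every mass level that appears.

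First I would establish boundedness of $(\rho_n)$ in $\A$: combining $\FT[\rho_n]\to i_{M,T}<0$, the estimate \eqref{potentialestimate} of Lemma~\ref{Lem:Bound}, and $\str\beta(\rho_n)\ge 0$ yields a uniform bound on $\str(-\Delta\,\rho_n)$, so up to a subsequence $\rho_n\rightharpoonup\rho$ weak-$*$ in $\A$ and $\sqrt{n_{\rho_n}}$ is bounded in $H^1(\R^3)$. Lions' lemma applied to $(n_{\rho_n})\subset L^1_+(\R^3)$ leaves the three standard alternatives. Vanishing is eliminated at once: if $\sup_{y\in\R^3}\int_{B(y,R)}n_{\rho_n}\,\D x\to 0$ for every $R>0$, then interpolation between the uniform $L^1$ and $L^3$ bounds gives $n_{\rho_n}\to 0$ in $L^p(\R^3)$ for every $p\in(1,3]$; inserting this into \eqref{potentialestimate} shows $\Q[\rho_n]\to 0$, hence $\liminf\FT[\rho_n]\ge 0$, in contradiction with $i_{M,T}<0$ from Proposition~\ref{sub-additivity}(ii).

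The dichotomy case is the crux. Assuming dichotomy at level $\alpha\in(0,M)$, and after translating so that the splitting is centred at the origin and choosing radii $R_n\to\infty$ with $\int_{R_n\le|x|\le 2R_n}n_{\rho_n}\,\D x\to 0$, I would apply Lemma~\ref{Lem:Truncation} with $\chi_{R_n},\xi_{R_n}$ to produce pieces $\rho_n^{(1)}:=\chi_{R_n}\,\rho_n\,\chi_{R_n}$ and $\rho_n^{(2)}:=\xi_{R_n}\,\rho_n\,\xi_{R_n}$ of masses tending to $\alpha$ and $M-\alpha$. The growing separation of the supports of $n_{\rho_n^{(1)}}$ and $n_{\rho_n^{(2)}}$, combined with Lemma~\ref{Lem:UnifPot}, drives the cross Coulomb--Newton term to zero, so that
\[
\FT[\rho_n]\ge\FT[\rho_n^{(1)}]+\FT[\rho_n^{(2)}]-o(1)\,,
\]
which, together with Proposition~\ref{sub-additivity}(i), forces $i_{M,T}=i_{\alpha,T}+i_{M-\alpha,T}$ and identifies each $(\rho_n^{(\ell)})$ (after mass renormalization) as a minimizing sequence for its sub-problem. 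I would now iterate the same trichotomy on each piece: vanishing of a piece of mass $\alpha_k$ is excluded either as in the first step (if $i_{\alpha_k,T}<0$) or, when $i_{\alpha_k,T}=0$, by the strict monotonicity of $M\mapsto i_{M,T}$ from Corollary~\ref{StrictMonotonicity}, which forbids $i_{M,T}=i_{M-\alpha_k,T}$ for $\alpha_k>0$. The iteration thus produces a profile decomposition of $(\rho_n)$ into at most countably many profiles of positive masses $\alpha_1,\alpha_2,\ldots$ summing to~$M$ (strict monotonicity again prevents any residual mass from being lost to vanishing), each converging strongly, after translation, to a minimizer of $i_{\alpha_k,T}$. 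If more than one profile survived, iterated application of Corollary~\ref{lbindinginequality} would give $\sum_k i_{\alpha_k,T}>i_{M,T}$, contradicting the equality above; hence a single profile of mass $M$ remains, i.e.\ compactness up to translations already holds for the original sequence.

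The conclusion is then immediate. After translation, $\rho_n\rightharpoonup\rho$ weak-$*$ in $\A$ with $\str\rho=M$, so Lemma~\ref{strong} upgrades this to strong convergence in $\A$. Lower semi-continuity of $\mathcal E_{\rm kin}$ and $\s$ (the latter via Fatou applied to $\sum_j\beta(\lambda_j^{(n)})$), combined with continuity of the quadratic Coulomb--Newton term under strong convergence of $(n_{\rho_n})$ in $L^{6/5}\cap L^3$, give $\FT[\rho]\le i_{M,T}$, so $\rho\in\mathfrak M_M$ is a minimizer and the full minimizing sequence is relatively compact up to translations.
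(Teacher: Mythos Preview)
Your proposal is correct and follows the same concentration--compactness scheme as the paper; the differences are mainly in presentation. For non-vanishing you invoke Lions' vanishing lemma and interpolation, whereas the paper splits the Coulomb integral into near/intermediate/far pieces to reach the same conclusion that $i_{M,T}<0$ forces a nontrivial concentration point. For dichotomy the paper is more economical than your full profile decomposition: after translating so that $\rho_n\rightharpoonup\rho^{(1)}$ with $M^{(1)}=\tr\rho^{(1)}\in(0,M)$, it chooses $R_n\to\infty$ so that $\rho_n^{(1)}:=\chi_{R_n}\rho_n\chi_{R_n}$ has mass tending to $M^{(1)}$; then $\rho_n^{(1)}\to\rho^{(1)}$ strongly, the energy splitting makes $\rho^{(1)}$ a minimizer for $i_{M^{(1)},T}$, and a \emph{single} further pass on the remainder $(\rho_n^{(2)})$ (ruling out $i_{M-M^{(1)},T}=0$ via Corollary~\ref{StrictMonotonicity}) produces a second minimizer $\rho^{(2)}$ of mass $M^{(2)}>0$. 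At that point the binding inequality already gives
\[
i_{M,T}=i_{M^{(1)},T}+i_{M^{(2)},T}+i_{M-M^{(1)}-M^{(2)},T}>i_{M^{(1)}+M^{(2)},T}+i_{M-M^{(1)}-M^{(2)},T}\ge i_{M,T}\,,
\]
a contradiction after only two extractions. Your iterative decomposition reaches the same end but requires you to argue that each extracted profile is genuinely a minimizer (this is exactly the ``weak limit is a minimizer'' step above) and to control termination and mass accounting; the paper's two-step version sidesteps those bookkeeping issues.
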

\begin{proof} The proof is based on the concentration-compactness method as in \cite{LeLe}. Compared to previous results (see for instance \cite{MR636734,MR992653,MR956083,LeLe}), the main difficulty arises in the splitting case, as we shall see below.

\medskip\noindent \emph{Step 1: Non-vanishing.\/} We split
\[
\Q[\rho_n]=\iint_{\mathbb R^6}\frac{n_{\rho_n}(x)\,n_{\rho_n}(y)}{|x-y|}\,\,\D x \, \D y
\]
into three integrals $I_1$, $I_2$ and $I_3$ corresponding respectively to the domains $|x-y|<1/R$, $1/R<|x-y|<R$ and $|x-y|>R$, for some $R>1$ to be fixed later. Since $n_{\rho_n}$ is bounded in $L^1(\R^3)\cap L^3\subset L^{7/5}(\R^3)$ by Lemma~\ref{Lem:Bound}, by Young's inequality we can estimate $I_1$ by
\begin{align*}
I_1 \leq\norm{n_{\rho_n}}_{L^{7/5}}^2\;\norm{\,|\cdot|^{-1}}_{L^{7/4}(B_{1/R})}\leq \frac C{R^{5/7}}\;,
\end{align*}
and directly get bounds on $I_2$ and $I_3$ by computing
\begin{align*}
&I_2\leq R\iint_{|x-y|<R}n_{\rho_n}(x)\,n_{\rho_n}(y)\;\D x \, \D y\leq R\,M\,\sup_{y\in\mathbb R^3}\int_{y+B_R}n_{\rho_n}(x)\;\D x\;,\\
&I_3\leq \frac{1}{R}\iint_{\R^6} n_{\rho_n}(x)\,n_{\rho_n}(y)\;\D x\,\D y\leq\frac{M^2}R\;.
\end{align*}
Keeping in mind that $i_{M,T}<0$, we have
\[
\FT[\rho_n]\geq i_{M,T}>-I_1-I_2-I_3
\]
for any $n$ large enough, which proves the \emph{non-vanishing} property:
\[
\lim_{n\to\infty}\int_{a_n+B_R}n_{\rho_n}(x)\;\D x\geq \frac 1{R\,M}\,\left(-\,i_{M,T}- \frac{M^2}R-\frac C{R^{5/7}} \right)>0
\]
for $R$ big enough and for some sequence $(a_n)_{n\in\N}$ of points in $\R^3$. Replacing $\rho_n$ by $\tau_{a_n}^*\,\rho_n\,\tau_{a_n}$ and denoting by~$\rho^{(1)}$ the weak limit of $(\rho_n)_{n\in\N}$ (up to the extraction of a subsequence), we have proved that $M^{(1)}=\int_{\R^3}n_{\rho^{(1)}}\,\D x>0$.

\medskip\noindent \emph{Step 2: Dichotomy.\/} Either $M^{(1)}=M$ and $\rho_n$ strongly converges to $\rho$ in $\A$ by Lemma \ref{strong}, or $M^{(1)}\in(0,M)$. Let us choose $R_n$ such that $\int_{\R^3}n_{\rho_n^{(1)}}\,\D x=M^{(1)}+(M-M^{(1)})/n$ where $\rho_n^{(1)}:=\chi_{R_n}\,\rho_n\,\chi_{R_n}$. Let $\rho_n^{(2)}:=\xi_{R_n}\,\rho_n\,\xi_{R_n}$. By definition of $R_n$, $\lim_{n\to\infty}R_n=\infty$. By Step 1, we know that $\rho_n^{(1)}$ strongly converges to $\rho^{(1)}$. By Identity \eqref{IMS} and Lemma~\ref{Lem:Truncation}, we find that
\[
\FT[\rho_n]\ge\FT[\rho_n^{(1)}]+\FT[\rho_n^{(2)}]+O(R_n^{-2})-\iint_{\R^3\times\R^3}\frac{n_{\rho_n^{(1)}}(x)\,n_{\rho_n^{(2)}}(y)}{|x-y|}\;\D x\,\D y\;,
\]
thus showing that
\[
i_{M,T}=\lim_{n\to\infty}\FT[\rho_n]\ge\FT[\rho^{(1)}]+\lim_{n\to\infty}\FT[\rho_n^{(2)}]\;.
\]
By step 1, $\lim_{n\to\infty}\int_{\R^3}n_{\rho_n^{(2)}}\,\D x=M-M^{(1)}$. By sub-additivity, according to Lemma~\ref{sub-additivity} (i), $\rho^{(1)}$ is a minimizer for $i_{M^{(1)},T}$, $(\rho_n^{(2)})_{n\in\N}$ is a minimizing sequence for $i_{M-M^{(1)},T}$ and
\[
i_{M,T}=i_{M^{(1)},T}+i_{M-M^{(1)},T}\;.
\]
Either $i_{M-M^{(1)},T}=0$ and then $i_{M,T}=i_{M-M^{(1)},T}$, which contradicts Corollary~\ref{StrictMonotonicity}, and the assumption $T<T^*$, or $i_{M-M^{(1)},T}<0$. In this case, we can reapply the previous analysis to $(\rho_n^{(2)})_{n\in\N}$ and get that for some $M^{(2)}>0$, $(\rho_n^{(2)})_{n\in\N}$ converges up to a translation to a minimizer $\rho^{(2)}$ for $i_{M^{(2)},T}$ and
\[
i_{M,T}=i_{M^{(1)},T}+i_{M^{(2)},T}+i_{M-M^{(1)}-M^{(2)},T}\;.
\]
{}From Corollary~\ref{lbindinginequality} and~\ref{sub-additivity} (i), we get respectively $i_{M^{(1)}+M^{(2)},T}<i_{M^{(1)},T}+i_{M^{(2)},T}$ and $i_{M^{(1)}+M^{(2)},T}+i_{M-M^{(1)}-M^{(2)},T}\le i_{M,T}$, a contradiction.
\end{proof}


As a direct consequence of the variational approach, the set of minimizers $\mathfrak M_M$ is \emph{orbitally stable} under the dynamics of \eqref{equationrho}. To quantify this stability, define
\[
{\rm dist}_{{\mathfrak M}_M}(\sigma):=\inf_{\rho\in\mathfrak M_M}\norm{\rho-\sigma}_{\A}\;.
\]
\begin{corollary}\label{Corstab} Assume that ($\beta$1)--($\beta$3) hold. For any given $M>0$, let $T\in(0,T^*(M))$. For any $\e>0$, there exists $\delta>0$ such that, for all $\rho_{\rm in}\in\A_M$ with ${\rm dist}_{{\mathfrak M}_M}(\rho_{\rm in})\leq \delta$,
\[
\sup_{t\in\R_+}{\rm dist}_{{\mathfrak M}_M}(\rho(t))\leq \e
\]
where $\rho(t)$ is the solution of \eqref{equationrho} with initial data $\rho_{\rm in}\in\A_M$.\end{corollary}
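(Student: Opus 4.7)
The plan is to run the classical Cazenave--Lions orbital stability argument by contradiction, using the compactness-up-to-translations statement of Proposition~\ref{theoexistence} as the decisive input. First I would assume, aiming for a contradiction, that there exist $\e_0>0$, a sequence $(\rho_{\rm in}^{(n)})_{n\in\N}\subset\A_M$ with ${\rm dist}_{\mathfrak M_M}(\rho_{\rm in}^{(n)})\to 0$, and times $t_n\geq 0$ such that ${\rm dist}_{\mathfrak M_M}(\rho^{(n)}(t_n))>\e_0$ for every $n$, where $\rho^{(n)}(t)$ denotes the solution of \eqref{equationrho} with initial datum $\rho_{\rm in}^{(n)}$.

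Next I would invoke the global well-posedness theory of the Hartree--von Neumann flow \eqref{equationrho} in the class $\A_M$, which supplies conservation of mass $\tr\rho^{(n)}(t)=M$ and of the free energy $\FT[\rho^{(n)}(t)]=\FT[\rho_{\rm in}^{(n)}]$ for every $t\ge 0$. Combined with the continuity of $\FT$ on $\A$-bounded sets---the kinetic term is continuous by the very definition of the $\A$-norm, the potential term by the Hardy--Littlewood--Sobolev and Sobolev estimates used in \eqref{potentialestimate}, and the entropy term via the pointwise bound $0\le\beta(\rho)\le\beta(M)\,\rho$ already recorded in the proof of Lemma~\ref{Lem:Bound}---this yields $\FT[\rho^{(n)}(t_n)]=\FT[\rho_{\rm in}^{(n)}]\to i_{M,T}$, so that $(\rho^{(n)}(t_n))_{n\in\N}$ is a minimizing sequence for $i_{M,T}$ in $\A_M$.

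I would then apply Proposition~\ref{theoexistence} to obtain translations $(a_n)_{n\in\N}\subset\R^3$ along which, up to extraction, $\tau_{a_n}^*\,\rho^{(n)}(t_n)\,\tau_{a_n}\to\rho^\star$ strongly in $\A$ for some $\rho^\star\in\mathfrak M_M$. Since $\EH$ and $\s$ are translation invariant and since $\tau_a$ commutes with $\sqrt{-\Delta}$, the unitary action $\rho\mapsto\tau_a^*\,\rho\,\tau_a$ preserves both the functional $\FT$ and the $\A$-norm. Hence $\mathfrak M_M$ is stable under this action, $\tau_{-a_n}^*\,\rho^\star\,\tau_{-a_n}\in\mathfrak M_M$ for every $n$, and
\[
{\rm dist}_{\mathfrak M_M}(\rho^{(n)}(t_n))\le\norm{\rho^{(n)}(t_n)-\tau_{-a_n}^*\,\rho^\star\,\tau_{-a_n}}_{\A}=\norm{\tau_{a_n}^*\,\rho^{(n)}(t_n)\,\tau_{a_n}-\rho^\star}_{\A}\longrightarrow 0\;,
\]
which contradicts the choice of $\e_0$ and completes the proof.

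The only delicate ingredient I borrow from outside is the time-dependent side: global well-posedness of \eqref{equationrho} in $\A_M$, together with the two conservation laws. I would appeal to the existing mixed-state Hartree theory, whose adaptation to the attractive kernel is made possible by the a priori boundedness of $\FT$ from below on $\A_M$ established in Lemma~\ref{Lem:Bound}. Once those conservation laws are granted, the compactness from Proposition~\ref{theoexistence} combined with the translation invariance of $\mathfrak M_M$ finishes the argument with no further work.
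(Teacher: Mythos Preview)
Your argument is correct and is precisely the approach the paper indicates: it cites \cite{CaLi, MaReWo} and states that the result ``is a direct consequence of the conservation of the free energy along the flow and the compactness of all minimizing sequences,'' leaving the details to the reader. You have supplied exactly those details via the standard Cazenave--Lions contradiction scheme, with Proposition~\ref{theoexistence} as the compactness input.
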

Similar results have been established in many earlier papers like, for instance in \cite{MaReWo} in the case of repulsive Coulomb interactions. As in \cite{CaLi, MaReWo}, the result is a direct consequence of the conservation of the free energy along the flow and the compactness of all minimizing sequences. According to \cite{Lie}, for $T\in(0,T_c]$, the minimizer corresponding to $i_{M,T}$ is unique up to translations (see next Section). A much stronger stability result can easily be achieved. Details are left to the reader.

\section{Critical Temperature for mixed states}\label{sec:critical}

In this subsection, we shall deduce the existence a critical temperature $T_c\in (0, T^*)$, above which minimizers $\rho\in\mathfrak M_M$ become true mixed states, i.e.~density matrix operators with rank higher than one.
\begin{lemma}\label{concavity} For all $M>0$, the map $T\mapsto i_{M,T}$ is concave.\end{lemma}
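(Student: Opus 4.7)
The plan is to exploit the fact that $T$ enters the free energy functional $\FT[\rho] = \EH[\rho] - T\,\s[\rho]$ only through the linear term $-T\,\s[\rho]$. Thus, for each fixed $\rho \in \A_M$, the map $T \mapsto \FT[\rho]$ is affine. The function $T \mapsto i_{M,T}$ is then the pointwise infimum of a family of affine functions indexed by $\rho \in \A_M$, and any such pointwise infimum is automatically concave on $[0,\infty)$.

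Concretely, I would fix $T_0, T_1 \in [0,\infty)$ and $\theta \in [0,1]$, set $T_\theta := (1-\theta)\,T_0 + \theta\,T_1$, and observe that for every $\rho \in \A_M$,
\[
\mathcal F_{T_\theta}[\rho] = \EH[\rho] - T_\theta\,\s[\rho] = (1-\theta)\,\mathcal F_{T_0}[\rho] + \theta\,\mathcal F_{T_1}[\rho] \ge (1-\theta)\,i_{M,T_0} + \theta\,i_{M,T_1}\,.
\]
Taking the infimum of the left-hand side over $\rho \in \A_M$ yields
\[
i_{M,T_\theta} \ge (1-\theta)\,i_{M,T_0} + \theta\,i_{M,T_1}\,,
\]
which is precisely concavity of $T \mapsto i_{M,T}$.

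There is no real obstacle here: the argument uses nothing beyond the definition of $\FT$ and the definition of $i_{M,T}$ as an infimum, and in particular does not require the existence of minimizers, the assumptions $(\beta 1)$--$(\beta 3)$, or the restriction $T < T^*$. The statement holds as soon as $\FT$ is well-defined on $\A_M$, which has already been established in Lemma~\ref{Lem:Bound}. As a byproduct one obtains standard regularity consequences of concavity, namely that $T \mapsto i_{M,T}$ is continuous on $(0,\infty)$ and admits left and right derivatives everywhere, which will be useful when comparing $i_{M,T}$ with the affine lower bound $i_{M,0} + T\,\beta(M)$ arising from pure-state test functions in the analysis of the critical temperature $T_c$.
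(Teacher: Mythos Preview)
Your proof is correct and is essentially the same idea as the paper's: both exploit that $\FT[\rho]$ is affine in $T$ for each fixed $\rho$, so the infimum is concave. The paper phrases this via supporting lines (at each $T_0$ the minimizer $\rho_{T_0}$ gives $i_{M,T}\le i_{M,T_0}+(T-T_0)\,|\s[\rho_{T_0}]|$), whereas you use the direct convex-combination inequality; your version has the minor advantage of not invoking the existence of a minimizer at $T_0$, which in the paper is only established later and only for $T_0<T^*$.
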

\begin{proof} Fix some $T_0>0$ and write
\[
\FT[\rho]=\mathcal F_{T_0}[\rho]+(T-T_0)\,|\mathcal S[\rho]|\;.
\]
Denoting by $\rho_{T_0}$ the minimizer for $\mathcal F_{T_0}$, we obtain
\[
i_{M,T}\leq i_{M,T_0}+(T-T_0)\,|\mathcal S[\rho_{T_0}]|
\]
which means that $|\mathcal S[ \rho_{T_0}]|$ lies in the cone tangent to $T\mapsto i_{M,T}$ and $i_{M,T}$ lies below it, i.e.~$T\mapsto i_{M,T}$ is concave.\end{proof}

Consider $T_c$ defined by \eqref{criticalT}, i.e.~the largest possible $T_c$ such that $i_{M,T}=i_{M,0}+T\,\beta(M)$ for $T\in[0,T_c]$ and recall some results concerning the zero temperature case. Lieb in \cite{Lie} proved that $\mathcal F_{T=0}=\EH$ has a unique radial minimizer $\rho_0 = M\,|\psi_0\rangle\langle \psi_0|$. The corresponding Hamiltonian operator
\begin{equation}\label{Hzero}
H_0:= - \Delta - |\psi_0|^2 \ast |\cdot|^{-1}=H_{\rho_0}
\end{equation}
admits countably many negative eigenvalues $(\mu^0_j)_{j\in\N}$, which accumulate at zero. We shall use these eigenvalues to characterize the critical temperature $T_c$. 
To this end we need the following lemma.
\begin{lemma}\label{exofTc} Assume that ($\beta$1)--($\beta$3) hold.  With $T_c$ defined by \eqref{criticalT}, $T_c(M)$ is positive for any $M>0$.\end{lemma}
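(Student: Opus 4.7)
The plan is to argue by contradiction, exploiting the spectral gap of the $T=0$ mean-field Hamiltonian $H_0$ defined in \eqref{Hzero}. Since Lieb's unique ground state $\rho_0=M\,|\psi_0\rangle\langle\psi_0|$ is a pure state, $\tr\,\beta(\rho_0)=\beta(M)$, and hence $\FT[\rho_0]=i_{M,0}+T\,\beta(M)$ for all $T\ge 0$, giving the upper bound $i_{M,T}\le i_{M,0}+T\,\beta(M)$. Assume for contradiction that $T_c(M)=0$. Then there exist $T_n\searrow 0$ and minimizers $\rho_n\in\A_M$ of $\mathcal F_{T_n}$ (existing by Proposition~\ref{theoexistence}) with $\mathcal F_{T_n}[\rho_n]<i_{M,0}+T_n\,\beta(M)$. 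This strict inequality rules out $\rho_n$ being a pure state of mass $M$: any such $M\,|\phi\rangle\langle\phi|\in\A_M$ satisfies $\mathcal F_{T_n}[M\,|\phi\rangle\langle\phi|]=\EH[M\,|\phi\rangle\langle\phi|]+T_n\,\beta(M)\ge i_{M,0}+T_n\,\beta(M)$. Hence each $\rho_n$ has rank at least two.

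Next, $\EH[\rho_n]\le\mathcal F_{T_n}[\rho_n]<i_{M,0}+T_n\,\beta(M)\to i_{M,0}$, so $(\rho_n)$ is a minimizing sequence for $\EH$ on $\A_M$. By Lieb's uniqueness result \cite{Lie} together with a standard concentration-compactness argument (following the $T=0$ specialization of Proposition~\ref{theoexistence}), up to translation we may assume $\rho_n\to\rho_0$ strongly in $\A$. Consequently $n_{\rho_n}\to n_{\rho_0}$ in $L^1\cap L^3$ (using $\|n_{\rho_n}-n_{\rho_0}\|_{L^1}\le\|\rho_n-\rho_0\|_{\mathfrak S_1}$ together with convergence of $\|\nabla\sqrt{n_{\rho_n}}\|_{L^2}$ via Sobolev), and Hardy-Littlewood-Sobolev then yields $V_{\rho_n}\to V_{\rho_0}$ in a sense ensuring norm-resolvent convergence $H_{\rho_n}\to H_0$. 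The ground state $\mu_0^0$ of $H_0$ is simple (as $\psi_0>0$), and the Coulombic asymptotics $V_{\rho_0}(x)\sim M/|x|$ imply that $H_0$ has infinitely many discrete eigenvalues below zero accumulating at $0$; in particular $\mu_1^0>\mu_0^0$ (strict gap). Standard perturbation theory then gives $\mu_{n,0}\to\mu_0^0$ and $\mu_{n,1}\to\mu_1^0$, hence $\mu_{n,1}-\mu_{n,0}\to\mu_1^0-\mu_0^0>0$.

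Finally, invoke the self-consistent equation \eqref{SCequation} and the pointwise relation \eqref{lambda}. Let $\mu_n$ denote the Lagrange multiplier of Proposition~\ref{E-L} for $\rho_n$ and $(\lambda_{n,j})$ its nonincreasing eigenvalues. From $\lambda_{n,j}=(\beta')^{-1}((\mu_n-\mu_{n,j})/T_n)_+$ at $j=0$, together with $\lambda_{n,0}\in(0,M]$ and monotonicity of $\beta'$,
\[
\mu_n=\mu_{n,0}+T_n\,\beta'(\lambda_{n,0})\le\mu_{n,0}+T_n\,\beta'(M)\,,
\]
while the rank-$\ge 2$ property forces $\lambda_{n,1}>0$, hence $\mu_n>\mu_{n,1}$. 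Combining,
\[
\mu_{n,1}-\mu_{n,0}<T_n\,\beta'(M)\xrightarrow[n\to\infty]{}0\,,
\]
contradicting $\mu_{n,1}-\mu_{n,0}\to\mu_1^0-\mu_0^0>0$. Thus $T_c(M)>0$. The main obstacle is the spectral convergence $\mu_{n,j}\to\mu_j^0$ for $j=0,1$, which requires careful control of the topology in which $V_{\rho_n}\to V_{\rho_0}$ so that Kato-Rellich perturbation theory applies; this can alternatively be bypassed by a direct min-max argument exploiting the simplicity of $\mu_0^0$ to produce a lower bound on $\mu_{n,1}$.
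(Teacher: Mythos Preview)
Your proof is correct and follows essentially the same approach as the paper: take $T_n\to 0$, use that the associated minimizers $\rho_n$ form a minimizing sequence for $\EH$ so that the eigenvalues of $H_{\rho_n}$ converge to those of $H_0$, and then extract a contradiction from the self-consistent relation \eqref{lambda} together with the spectral gap $\mu_1^0-\mu_0^0>0$. The only cosmetic difference is in the final step: the paper phrases the contradiction as $\lambda_{n,0}=(\beta')^{-1}\big((\mu_n-\mu_{n,0})/T_n\big)\to+\infty$, whereas you derive the equivalent quantitative bound $\mu_{n,1}-\mu_{n,0}<T_n\,\beta'(M)\to 0$; your formulation is arguably cleaner since it makes explicit use of $\lambda_{n,0}\le M$ and directly sets up the contradiction with the limiting gap.
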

\begin{proof} Consider a sequence $(T_n)_{n\in\N}\in\R_+$ such that $\lim_{\to\infty}T_n=0$. Let $\rho^{(n)}\in\mathfrak M_M$ denote the associated sequence of minimizers with occupation numbers $(\lambda_{j}^{(n)})_{j\in\N}$. According to~\eqref{lambda}, we know that
\[
\lambda_{j}^{(n)}=(\beta')^{-1}\left((\mu^{(n)}-\mu_{j}^{(n)})/T_n\right)\quad\forall\;j\in\N\;,
\]
where, for any $n\in\N$, $\big(\mu^{(n)}_j\big)_{j\in\N}$ denotes the sequence of eigenvalues of $H_{\rho^{(n)}}$ and $\mu^{(n)}\le 0$ is the associated chemical potential. Since $\rho^{(n)}$ is a minimizing sequence for $\mathcal F_{T=0}$, we know that
\[
\lim_{n\to\infty}
\mu_j^{(n)}=\mu_j^0\leq 0
\]
where $(\mu_j^0)_{j\in\N}$ are the eigenvalues of $H_0$. Arguing by contradiction, we assume that
\[
\liminf_{n\to \infty}\lambda_1^{(n)}=\epsilon>0\;.
\]
By \eqref{lambda} and the fact that $\beta'$ is increasing, this implies: $\mu^{(n)}>\mu_1^{(n)}\to\mu_1^0$ as $n\to\infty$. Then
\[
M=\lambda_0^0\geq\lim_{\to\infty}\lambda_0^{(n)}=\lim_{\to\infty}(\beta')^{-1}\left(\tfrac{\mu^{(n)}-\mu_{0}^{(n)}}{T_n}\right)\geq \lim_{\to\infty}(\beta')^{-1}\left(\tfrac{\mu_1^0-\mu_{0}^{(n)}}{T_n}\right)=+\infty\;.
\]
This proves that there exists an interval $[0,T_c)$ with $T_c>0$ such that, for any $T_n\in[0,T_c)$, it holds $\mu^{(n)}<\mu^{(n)}_1$, and, as a consequence, $\rho^{(n)}$ is of rank one. Hence, for any $T\in[0,T_c)$, the minimizer of $\FT$ in $\A_M$ is also a minimizer of $\EH+T\,\beta(M)$. From \cite{Lie}, we know that it is unique and given by $\rho_0$, in which case $i_{M,T}=i_{M,0}- T\,\mathcal S[\rho_0]=i_{M,0}+ T\,\beta [M]$.\end{proof}

As an immediate consequence of Lemmata~\ref{concavity} and~\ref{exofTc} we obtain the following corollary.
\begin{corollary} Assume that ($\beta$1)--($\beta$3) hold. There is a pure state minimizer of mass $M$ if and only if $T\in[0,T_c]$.\end{corollary}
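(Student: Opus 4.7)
The plan is to combine Lemma~\ref{concavity} with the definition \eqref{criticalT} of $T_c$ and the simple observation that pure states make the entropy functional collapse to a constant.

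First I would note that any pure state of mass $M$ takes the form $\rho=M\,|\psi\rangle\langle\psi|$ with $\norm{\psi}_{L^2}=1$. Since $\beta(0)=0$ by assumption ($\beta$2), the only contribution to $\s[\rho]=-\tr\beta(\rho)$ comes from the single non-zero eigenvalue $M$, so $\s[\rho]=-\beta(M)$. Consequently $\FT[\rho]=\EH[\rho]+T\,\beta(M)$ for every pure state of mass $M$, and the infimum of $\FT$ restricted to pure states equals $i_{M,0}+T\,\beta(M)$, attained by Lieb's rank one ground state $\rho_0=M\,|\psi_0\rangle\langle\psi_0|$.

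The ``if'' direction then follows immediately: for $T\in[0,T_c]$, the definition \eqref{criticalT} gives $i_{M,T}=i_{M,0}+T\,\beta(M)=\FT[\rho_0]$, so $\rho_0\in\mathfrak M_M$ is a pure state minimizer. For the converse I would suppose that some pure state $\rho=M\,|\psi\rangle\langle\psi|$ minimizes $\FT$ at temperature $T>0$. The identity above combined with $\EH[\rho]\ge i_{M,0}$ yields
\[
i_{M,T}=\EH[\rho]+T\,\beta(M)\ge i_{M,0}+T\,\beta(M)\;,
\]
while testing $\FT$ against $\rho_0$ supplies the reverse inequality, so $i_{M,T}=i_{M,0}+T\,\beta(M)$.

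It then remains to upgrade this single equality to the interval condition encoded in \eqref{criticalT}. Here I would invoke Lemma~\ref{concavity}: the map $\tau\mapsto i_{M,\tau}$ is concave, lies everywhere below the affine function $\tau\mapsto i_{M,0}+\tau\,\beta(M)$ (by testing with $\rho_0$), and coincides with it at both $\tau=0$ and $\tau=T$; concavity then forces equality on all of $[0,T]$, whence $T\le T_c$ by the maximality in \eqref{criticalT}. The argument contains no genuine obstacle: the only non-trivial ingredient is the concavity of $\tau\mapsto i_{M,\tau}$ already established in Lemma~\ref{concavity}, and its role here is the standard one-line bridge from pointwise equality to equality on an entire interval.
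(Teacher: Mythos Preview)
Your proof is correct and follows the same route as the paper's: both arguments rest on the observation that a pure state forces $i_{M,T}=i_{M,0}+T\,\beta(M)$, together with the concavity of $T\mapsto i_{M,T}$ from Lemma~\ref{concavity} and the universal upper bound $i_{M,T}\le i_{M,0}+T\,\beta(M)$ obtained by testing with $\rho_0$. The paper states the concavity step in contrapositive form (equality fails for $T>T_c$), whereas you phrase it directly (equality at $T$ propagates to $[0,T]$), but this is the same argument written out in more detail.
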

\begin{proof} A pure state satisfies $i_{M,T}=i_{M,0}+T\,\beta(M)$ and from the concavity property stated in Lemma \ref{concavity} we conclude $i_{M,T}<i_{M,0}+T\,\beta(M)$ for all $T>T_c$.
\end{proof}
We finally give a characterization of $T_c$.
\begin{proposition} Assume that ($\beta$1)--($\beta$3) hold. For any $M>0$, the critical temperature satisfies
\[\label{Tcrit}
T_c(M) = \, \frac{\mu^0_1-\mu^0_0}{\beta'(M)}\;,
\]
where $\mu^0_0$ and $\mu^0_1$ are the two lowest eigenvalues of $H_0$ defined in \eqref{Hzero}.\end{proposition}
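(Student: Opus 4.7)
The plan is to prove the two inequalities $T_c(M)\le T^\sharp$ and $T_c(M)\ge T^\sharp$ with $T^\sharp:=(\mu^0_1-\mu^0_0)/\beta'(M)$.

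\emph{Upper bound.} For any $T\in(0,T_c(M)]$, the preceding Corollary guarantees that $\rho_0=M\,|\psi_0\rangle\langle\psi_0|$ minimizes $\FT$. By Proposition~\ref{E-L}, it solves the self-consistent equation $\rho_0=(\beta')^{-1}\big((\mu-H_0)/T\big)_+$ for some $\mu\le 0$. Decomposing in the eigenbasis $(\psi_j)_{j\ge 0}$ of $H_0$, the occupation numbers $\lambda_0=M$ and $\lambda_j=0$ for $j\ge 1$ force respectively $\mu=\mu^0_0+T\,\beta'(M)$ and $\mu\le\mu^0_j$ for every $j\ge 1$. The tightest constraint, coming from $j=1$, rearranges to $T\le T^\sharp$. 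Taking the supremum yields the upper bound.

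\emph{Lower bound by a limit from above.} I would pick $T_n\downarrow T_c(M)$ with corresponding mixed-state minimizers $\rho_n$ (mixed by the preceding Corollary). Proposition~\ref{theoexistence} provides strong precompactness of $(\rho_n)_n$ in $\A$ up to translations, so along a subsequence $\rho_n\to\bar\rho$ strongly, with $\bar\rho$ minimizing $\mathcal F_{T_c(M)}$. Once one has $\bar\rho=\rho_0$, the eigenvalues $\mu^{(n)}_j$ of $H_{\rho_n}$ converge to $\mu^0_j$, the occupations satisfy $\lambda^{(n)}_0\to M$ and $\lambda^{(n)}_1\to 0^+$, and passing to the limit in \eqref{lambda} for $j=0$ and $j=1$, using $\beta'(0)=0$ from $(\beta 2)$, gives
\[
\mu_n=\mu^{(n)}_0+T_n\,\beta'(\lambda^{(n)}_0)\longrightarrow\mu^0_0+T_c(M)\,\beta'(M)\quad\mbox{and}\quad\mu_n=\mu^{(n)}_1+T_n\,\beta'(\lambda^{(n)}_1)\longrightarrow\mu^0_1.
\]
Equating the two limits produces $\mu^0_1-\mu^0_0=T_c(M)\,\beta'(M)$, hence $T_c(M)=T^\sharp$.

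\emph{Main obstacle.} The delicate step will be justifying $\bar\rho=\rho_0$, since a priori the strong limit could be any minimizer of $\mathcal F_{T_c(M)}$. A useful complementary tool is the trial state $\tilde\rho_\varepsilon=(M-\varepsilon)|\psi_0\rangle\langle\psi_0|+\varepsilon|\psi_1\rangle\langle\psi_1|$: expanding using $\tr(H_0\,\tilde\rho_\varepsilon)=M\,\mu^0_0+\varepsilon(\mu^0_1-\mu^0_0)$ together with $\beta'(0)=\beta(0)=0$ yields
\[
\FT[\tilde\rho_\varepsilon]-\FT[\rho_0]=\varepsilon\,\bigl((\mu^0_1-\mu^0_0)-T\,\beta'(M)\bigr)+o(\varepsilon),
\]
which is negative for $T>T^\sharp$ and small $\varepsilon>0$, confirming that $\rho_0$ ceases to be a minimizer for $T>T^\sharp$ and re-proving the upper bound. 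Combined with the concavity of $T\mapsto i_{M,T}$ from Lemma~\ref{concavity}, one argues that the branch of mixed minimizers bifurcates from $\rho_0$ at $T=T^\sharp$, which forces $\bar\rho=\rho_0$ and closes the argument.
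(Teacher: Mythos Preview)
Your overall strategy coincides with the paper's: the upper bound via the self-consistent equation for the pure state $\rho_0$, and the lower bound by taking $T_n\downarrow T_c$, exploiting compactness of minimizing sequences, and passing to the limit in the relations $\mu_n=\mu^{(n)}_j+T_n\,\beta'(\lambda^{(n)}_j)$ for $j=0,1$. Your limit computation is in fact a touch cleaner than the paper's (which passes to the limit in the summed identity $M\,\mu^{(n)}=\sum_j\lambda^{(n)}_j(\mu^{(n)}_j+T^{(n)}\beta'(\lambda^{(n)}_j))$ instead).

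The genuine gap is exactly where you place it: why must the limit $\bar\rho$ be $\rho_0$? Your proposed fix---``the branch of mixed minimizers bifurcates from $\rho_0$ at $T=T^\sharp$''---presupposes $T_c=T^\sharp$ and is not a proof; the trial-state expansion with $\tilde\rho_\varepsilon$ only re-establishes the upper bound and says nothing about where the mixed branch originates. The paper fills this gap by a direct energy--entropy splitting rather than a bifurcation heuristic: from the definition of $T_c$ and concavity one has $i_{M,T_c}=i_{M,0}+T_c\,\beta(M)$, so any minimizer $\rho$ at $T=T_c$ satisfies
\[
\EH[\rho]+T_c\,\tr\beta(\rho)=i_{M,0}+T_c\,\beta(M)\,.
\]
The paper then invokes the two characterizations $\EH[\rho]=i_{M,0}\Leftrightarrow\rho=\rho_0$ (Lieb's uniqueness for the $T=0$ problem) and $\tr\beta(\rho)=\beta(M)\Leftrightarrow\rho$ rank one (strict convexity of $\beta$ with $\beta(0)=0$) to conclude that every minimizer at $T_c$ is rank one. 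This forces $\bar\rho=\rho_0$, hence $\lambda^{(n)}_1\to 0$ and $\mu^{(n)}\to\mu^0_1$, after which your limit argument (or the paper's) finishes the proof. You should replace the bifurcation language with this splitting argument.
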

\begin{proof} For $T \leq T_c$, there exists a unique pure state minimizer $\rho_0$. For such a pure state, the Lagrange multiplier associated to the mass constraint $\tr \rho_0 = M$ is given by $\mu= \mu(T)$. According to~\ref{Identity:LagrangeMultiplier}, it is given by $T\,\beta'(M)+\mu_0^0-\mu(T)=0$ for any $T<T_c$ (as long as the minimizer is of rank one). This uniquely determines $\mu(T)$. On the other hand we know that $0\neq\lambda_1=(\beta')^{-1}\left((\mu_1^0-\mu(T))/T\right)$ if $T>(\mu_1^0-\mu_0^0)/\beta'(M)$, thus proving that $T_c\le(\mu^0_1-\mu^0_0)/\beta'(M)$.

\medskip It remains to prove equality: By using Lemmas \ref{concavity} and \ref{exofTc}, we know that $i_{M,T_c}=i_{M,0}+T_c\,\beta(M)$. Let $\rho$ be a minimizer for $T=T_c$. The two inequalities, $i_{M,0}\le\EH[\rho]$ and $\beta(M)\le\tr\beta(\rho)$ hold as equalities if and only if, in both cases, $\rho$ is of rank one. Consider a sequence $(T^{(n)})_{n\in\N}$ such that $\lim_{n\to\infty}T^{(n)}=T_c$, $T^{(n)}>T_c$ for any $n\in\N$ and, if $(\rho^{(n)})_{n\in\N}$ denotes a sequence of associated minimizers with $\big(\mu^{(n)}_j\big)_{j\in\N}$ and \hbox{$\mu^{(n)}\le 0$} as in the proof of Lemma~\ref{exofTc}, we have $\mu^{(n)}>\mu^{(n)}_1$ so that $\lambda^{(n)}_1>0$ for any $n\in\N$. The sequence $(\rho^{(n)})_{n\in\N}$ is minimizing for $i_{M,T_c}$, thus proving that $\lim_{n\to\infty}\lambda^{(n)}_1=0$, so that \hbox{$\lim_{n\to\infty}\mu^{(n)}=\mu_1^0$}. Passing to the limit in
\[
M\,\mu^{(n)}=\sum_{j\in\N}\lambda^{(n)}_j\left(\mu^{(n)}_j+T^{(n)}\,\beta'(\lambda^{(n)}_j)\right)
\]
completes the proof.\end{proof}

\section{Estimates on the maximal temperature}\label{sec:finite}


All above results require $T<T^*$, the maximal temperature. In some situations, we can prove that $T^*$ is finite.
\begin{proposition}\label{propfinite}
Let $\beta(s)=s^p$ with $p\in(1,7/5)$. Then, for any $M>0$, the maximal temperature $T^*=T^*(M)$ is finite. \end{proposition}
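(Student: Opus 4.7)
Plan. The strategy is to show the non-trivial lower bound $i_{M,T}\ge 0$ for $T$ sufficiently large. By the spatial dilation $\rho\mapsto\rho_\lambda=U_\lambda^*\rho\,U_\lambda$ from the proof of Proposition~\ref{sub-additivity}, which preserves both mass and entropy while rescaling $\mathcal{E}_{\rm kin}$ as $\lambda^2$ and $\Q$ as $\lambda$, explicit minimization in $\lambda>0$ of the resulting quadratic yields
\[
i_{M,T}\;=\;\inf_{\rho\in\A_M}\Big\{-\frac{\Q[\rho]^2}{4\,\mathcal{E}_{\rm kin}[\rho]}+T\,\tr(\rho^p)\Big\}.
\]
So it is enough to establish a Gagliardo--Nirenberg--Lieb--Thirring type inequality
\[
\Q[\rho]^2\;\le\;C(M)\;\mathcal{E}_{\rm kin}[\rho]\;\tr(\rho^p)\qquad\forall\,\rho\in\A_M,
\]
since then $i_{M,T}\ge(T-C(M)/4)\,\tr(\rho^p)\ge 0$ as soon as $T\ge C(M)/4$, and consequently $T^*(M)\le C(M)/4<+\infty$.

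The target inequality will be derived by chaining three ingredients: (a) the Hardy--Littlewood--Sobolev bound $\Q[\rho]\le C\,\norm{n_\rho}_{L^{6/5}}^2$ (already used in Lemma~\ref{Lem:Bound}); (b) the H\"older interpolation $\norm{n_\rho}_{L^{6/5}}^2\le M^{7/6}\,\norm{n_\rho}_{L^{5/3}}^{5/6}$, which exploits $\norm{n_\rho}_{L^1}=M$; and (c) a Lieb--Thirring type bound controlling $\int n_\rho^{5/3}\,\D x$ in terms of $\mathcal{E}_{\rm kin}[\rho]$ and the spectral quantity $\tr(\rho^p)$. For (c) one starts from the classical estimate $\int n_\rho^{5/3}\le C\,\norm{\rho}_\infty^{2/3}\mathcal{E}_{\rm kin}[\rho]$ and exploits the two-sided spectral relation
\[
\big(\tr(\rho^p)/M\big)^{1/(p-1)}\;\le\;\norm{\rho}_\infty\;\le\;\tr(\rho^p)^{1/p},
\]
valid for any $\rho\in\A_M$ with $p>1$. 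Plugging the estimates together, using the a~priori bound $\tr(\rho^p)\le M^p$, and carrying out the exponent bookkeeping, one obtains $C(M)\le C_p\,M^{3-p}$ precisely when $p\in(1,7/5)$. The critical threshold $p=7/5$ is dictated by the algebra: it corresponds exactly to the value at which the associated Lane--Emden index $n=(3p-1)/(2(p-1))$ equals $4$, still strictly below the Plummer value $n=5$, so that the semiclassical Euler--Lagrange profile $\rho=((\mu-H_\rho)/(Tp))_+^{1/(p-1)}$ describes a compactly supported polytrope of finite mass, which is ultimately what forces $T^*$ to be finite.

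The main obstacle will be step (c), in the regime $\tr(\rho^p)\to 0^+$ corresponding to density matrices with many small occupation numbers (the Thomas--Fermi-like extreme). There the naive consequence of $\norm{\rho}_\infty\le\tr(\rho^p)^{1/p}$ alone is too weak, and one must simultaneously exploit the lower bound $\norm{\rho}_\infty\ge(\tr(\rho^p)/M)^{1/(p-1)}$ in order to close the inequality uniformly across the admissible spectral range. This is precisely the regime in which mixed-state minimizers dominate pure ones (cf.\ Section~\ref{sec:critical}), so any inequality that closes the argument must interpolate correctly between the pure-state extremum $\tr(\rho^p)=M^p$ and the Thomas--Fermi-like extremum $\tr(\rho^p)=M\,\e^{p-1}$.
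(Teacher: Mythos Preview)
Your reduction via spatial dilation is correct and quite clean: the identity
\[
i_{M,T}=\inf_{\rho\in\A_M}\Big\{-\frac{\Q[\rho]^2}{4\,\mathcal E_{\rm kin}[\rho]}+T\,\tr(\rho^p)\Big\}
\]
shows that $T^*(M)<\infty$ is \emph{equivalent} to the Gagliardo--Nirenberg--Lieb--Thirring inequality $\Q[\rho]^2\le C(M)\,\mathcal E_{\rm kin}[\rho]\,\tr(\rho^p)$ that you state. So the target is right; the problem is in the proposed proof of it.

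Your chain (a)--(b)--(c) yields $\Q[\rho]^2\le C\,M^{7/3}\,\|\rho\|_\infty^{2/3}\,\mathcal E_{\rm kin}[\rho]$, and closing it requires the spectral estimate $\|\rho\|_\infty^{2/3}\le C'(M)\,\tr(\rho^p)$ uniformly on $\A_M$. This estimate is \emph{false}. Take $\rho=\lambda_0\,|\psi_0\rangle\langle\psi_0|+\sigma$ with $\|\psi_0\|_2=1$, where $\sigma\ge 0$ is supported far from $\psi_0$, has $\tr\sigma=M-\lambda_0$, and $\|\sigma\|_\infty\le\epsilon$ (e.g.\ $N=(M-\lambda_0)/\epsilon$ orthonormal bumps with occupation $\epsilon$). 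Then $\|\rho\|_\infty=\lambda_0$ while $\tr(\rho^p)=\lambda_0^p+\tr(\sigma^p)\le\lambda_0^p+\epsilon^{p-1}M\to\lambda_0^p$ as $\epsilon\to 0$. Hence $\|\rho\|_\infty^{2/3}/\tr(\rho^p)\to\lambda_0^{2/3-p}\to\infty$ as $\lambda_0\to 0$, since $p>2/3$. The two-sided bound $(\tr(\rho^p)/M)^{1/(p-1)}\le\|\rho\|_\infty\le\tr(\rho^p)^{1/p}$ that you propose to ``simultaneously exploit'' cannot rescue this: the lower bound on $\|\rho\|_\infty$ points the wrong way, and the upper bound only gives $\|\rho\|_\infty^{2/3}\le\tr(\rho^p)^{2/(3p)}$, which again diverges relative to $\tr(\rho^p)$ as the latter tends to zero. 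The loss is not in step~(c) but already in step~(b): the interpolation $\|n_\rho\|_{L^{6/5}}^2\le M^{7/6}\|n_\rho\|_{L^{5/3}}^{5/6}$ uses the \emph{total} mass $M$, most of which sits in the diluted junk $\sigma$ that carries essentially no $L^{5/3}$ norm, no potential energy, and no entropy.

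The paper avoids this trap by never passing through $\|\rho\|_\infty$ or $\|n_\rho\|_{L^{5/3}}$. It uses the Legendre-dual Lieb--Thirring inequality from \cite{DoFeLoPa},
\[
2T\,\tr(\rho^p)+\tr(-\Delta\,\rho)-\tr(V\rho)\ge -c\,T^{-1/(p-1)}\sum_j|\mu_j(V)|^{p/(p-1)}\ge -c'\,T^{-1/(p-1)}\int_{\R^3}|V|^{q}\,\D x
\]
with $q=\tfrac{p}{p-1}+\tfrac32$, and then applies HLS, H\"older and Sobolev to $V=V_\rho$ to obtain $\FT[\rho]\ge\tfrac12\,\mathcal E_{\rm kin}[\rho]-c\,T^{-1/(p-1)}\mathcal E_{\rm kin}[\rho]^{1+\eta}$ with $\eta=(7-5p)/(4(p-1))>0$. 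Combined with the a~priori bound $\mathcal E_{\rm kin}[\rho]\le C\,M^3$ valid whenever $\EH[\rho]<0$, this forces $T\le C(M)$. In effect the paper interpolates $n_\rho$ between $L^1$ and $L^3$ with a $p$-dependent exponent $r$ satisfying $1/r=2/3+1/q$, rather than your fixed $r=5/3$; that choice is exactly what makes the bookkeeping close at $p=7/5$.
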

\begin{proof} Let $V$ be a given non-negative potential. From \cite{DoFeLoPa}, we know that
\[
2\,T\,\tr\beta(\rho)+\tr(-\Delta\,\rho)-\tr(V\rho)\ge-(2\,T)^{-\frac 1{p-1}}\,(p-1)\,p^{-\frac p{p-1}}\sum_j|\mu_j(V)|^\gamma
\]
where $\gamma=\frac p{p-1}$ and $\mu_j(V)$ denotes the negative eigenvalues of $-\Delta-V$. The sum is extended to all such eigenvalues. By the Lieb-Thirring inequality, we have the estimate
\[
\sum_j|\mu_j(V)|^\gamma \le C_{\rm LT}(\gamma)\int_{\R^3}|V|^q\,\D x
\]
with $q=\gamma+\frac 32$. In summary, this amounts to
\[
2\,T\,\tr\beta(\rho)+\tr(-\Delta\,\rho)-\tr(V\rho)\ge-(2\,T)^{-\frac 1{p-1}}\,(p-1)\,p^{-\frac p{p-1}}\,C_{\rm LT}(\gamma)\int_{\R^3}|V|^q\,\D x\;.
\]
Applying the above inequality to $V=V_\rho = n_\rho \ast |\,\cdot\,|^{-1}$, we find that
\begin{align*}
\FT[\rho]=& \ \frac 12\,\tr(-\Delta\,\rho)+\frac 12\,\Big[(2\,T)\,\tr\beta(\rho)+\tr(-\Delta\,\rho)-\tr(V_\rho\,\rho)\Big]\\
\ge & \ \frac 12\,\tr(-\Delta\,\rho)-T^{-\frac 1{p-1}}\,(2\,p)^{-\frac p{p-1}}\,C_{\rm LT}(\gamma)\int_{\R^3}|V_\rho|^q\,\D x\;.
\end{align*}
Next, we invoke the Hardy-Littlewood-Sobolev inequality
\[
\int_{\R^3}|V_\rho|^q\,\D x\le C_{\rm HLS}\,\|n_\rho\|_{L^r(\R^3)}^q
\]
for some $r>1$ such that $\frac 1r=\frac 23+\frac 1q$. Notice that $r>1$ means $q>3$ and hence $p<3$. H\"older's inequality allows to estimate the right hand side by
\[
\|n_\rho\|_{L^r(\R^3)}\le\|n_\rho\|_{L^1(\R^3)}^\theta\,\|n_\rho\|_{L^3(\R^3)}^{1-\theta}
\]
with $\theta=\frac 32\,\big(\frac 1r-\frac 13\big)$. Since $\|n_\rho\|_{L^3(\R^3)}$ is controlled by $\norm{\nabla\sqrt{n_\rho}}_{L^2}^2$ using Sobolev's embedding, which is itself bounded by $\tr(-\Delta\,\rho)$, we conclude that
\[
\int_{\R^3}|V_\rho|^q\,\D x\le c\,M^{q\,\theta}\,\left(\tr(-\Delta\,\rho)\right)^{q\,(1-\theta)}
\]
for some positive constant $c$ and, as a consequence,
\begin{equation}\label{Estim:LiebThirring}
\FT[\rho]\ge\frac 12\,\tr(-\Delta\,\rho)-T^{-\frac 1{p-1}}\,\mathsf K\tr(-\Delta\,\rho)^{q\,(1-\theta)},
\end{equation}
for some $\mathsf K>0$. Moreover we find that
\[
q\,(1-\theta)=1+\eta\quad\mbox{with}\quad\eta=\frac{7-5\,p}{4\,(p-1)}\;,
\]
so that $\eta$ is positive if $p\in(1,7/5)$.

Assume that $i_{M,T}<0$ and consider an admissible $\rho\in\A_M$ such that $\FT[\rho]=i_{M,T}$. Since $\tr\beta(\rho)$ is positive, as in the proof of \eqref{potentialestimate}, we know that for some positive constant $C$, which is independent of $T>0$,
\[
0>\FT[\rho]>\EH[\rho]\ge\tr(-\Delta\,\rho)-C\,M^{3/2}~\tr(-\Delta\,\rho)^\frac 12\,,
\]
and, as a consequence,
\[
\tr(-\Delta\,\rho)\le C^2\,M^3\,.
\]
On the other hand, by \eqref{Estim:LiebThirring}, we know that $\FT[\rho]<0$ means that
\[
\tr(-\Delta\,\rho)>\left(\frac{T^\frac 1{p-1}}{2\,\mathsf K}\right)^\frac 1\eta.
\]
The compatibility of these two conditions amounts to
\[
T^\frac 1{p-1}\le 2\,\mathsf K\,C^{2\,\eta}\,M^{3\,\eta}\,,
\]
which provides an upper bound for $T^*(M)$.\end{proof}

Finally, we infer the following asymptotic property for the infimum of $\FT[\rho]$.
\begin{lemma} Assume that ($\beta$1)--($\beta$2) hold. If $T^* < +\infty$, then $\lim_{T\rightarrow T^*_-}i_{M,T}=0$.
\end{lemma}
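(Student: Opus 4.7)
The strategy is to reduce the claim to a direct sandwich argument based on two pieces of information already contained in Section~\ref{sec:basic}: the Lipschitz-type upper bound $i_{M,T}\le i_{M,T_0}+(T-T_0)\,\beta(M)$ for any $T>T_0\ge 0$, and the boundary value $i_{M,T^*}=0$ when $T^*<+\infty$. Both are established inside the proof of Proposition~\ref{sub-additivity}. The idea is simply to invert the roles of $T$ and $T_0$: take $T_0=T$ and compare with $T^*$ on the right-hand side.

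More concretely, I would apply the Lipschitz inequality with $T_0=T<T^*$ and its upper argument replaced by $T^*$, which gives
\[
0\;=\;i_{M,T^*}\;\le\;i_{M,T}+(T^*-T)\,\beta(M)\,,
\]
so that $i_{M,T}\ge -(T^*-T)\,\beta(M)$ for every $T\in[0,T^*)$. Combined with the monotonicity of $T\mapsto i_{M,T}$ (Proposition~\ref{sub-additivity}~(ii)) and $i_{M,T^*}=0$, which together force $i_{M,T}\le 0$ for every $T<T^*$, one obtains the squeezing
\[
-(T^*-T)\,\beta(M)\;\le\;i_{M,T}\;\le\;0\,,
\]
whence $\lim_{T\to T^*_-}i_{M,T}=0$.

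There is no genuine obstacle here; the only point requiring care is to make sure the Lipschitz bound and the boundary identity $i_{M,T^*}=0$ are invoked correctly. For completeness I would briefly recall the derivation of the Lipschitz bound: given $\varepsilon>0$, choose $\sigma\in\A_M$ with $\mathcal F_{T_0}[\sigma]\le i_{M,T_0}+\varepsilon$; then write $\FT[\sigma]=\mathcal F_{T_0}[\sigma]+(T-T_0)\,\tr\beta(\sigma)$, observe that $\tr\beta(\sigma)\le\beta(M)$ (which follows from $\beta(\lambda_j)\le(\lambda_j/M)\,\beta(M)$ for each eigenvalue $\lambda_j\in[0,M]$, a consequence of ($\beta$1)--($\beta$2)), and pass to the infimum and then to $\varepsilon\to 0$. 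Once this is in place, the lemma follows from the two-sided estimate above by a one-line sandwich.
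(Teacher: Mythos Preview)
Your proof is correct and uses essentially the same ingredients as the paper: the linear upper bound $i_{M,T}\le i_{M,T_0}+(T-T_0)\,\beta(M)$ together with $i_{M,T^*}=0$. The paper packages this as a contradiction (if $i_{M,T_0}\le-\delta$ for $T_0$ arbitrarily close to $T^*$, the same inequality forces $i_{M,T}<0$ beyond $T^*$), whereas you turn it into a direct squeeze; your version is marginally cleaner and, by working with near-minimizers rather than an actual minimizer $\rho_{T_0}$, avoids tacitly invoking existence, which the lemma's hypotheses ($\beta$1)--($\beta$2) alone do not guarantee.
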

\begin{proof} The proof follows from the concavity of $T\mapsto i_{M,T}$ (see Lemma~\ref{concavity}). Let $\rho_{T_0}$ denote the minimizer at $T_0<T^*$, with $\mathcal F_{T_0}[\rho_{T_0}] = - \delta $ for some $\delta >0$. Then we observe
\[
i_{M,T} \leq (T- T_0) \sum_{j\in\N} \beta(\lambda_j) + \mathcal F_{T_0}[\rho_{T_0}] \leq (T- T_0)\,\beta(M) - \delta < 0\;,
\]
for all $T$ such that: $T-T_0 \le \delta /\beta(M)$, which is in contradiction with the definition of $T^*$ given in \eqref{Tstar} if $\liminf_{T\rightarrow T^*_-}i_{M,T}<0$.\end{proof}

\section{Concluding remarks}\label{conclusion}

Assumption ($\beta$3) is needed for Corollary~\ref{StrictMonotonicity}, which is used itself in the proof of Proposition~\ref{theoexistence} (compactness of minimizing sequences). When $\beta(s)=s^p$, this means that we have to introduce the restriction $p\le3$. If look at the details of the proof, what is really needed is that $\mu=\frac{\partial\,i_{M,T}}{\partial M}$ takes negative values. To further clarify the role of the threshold $p=3$, we can state the following result.
\begin{proposition} Assume that $\beta(s)=s^p$ for some $p>1$. Then we have
\begin{equation}\label{Inequality:PartialDerivatives}
M\,\frac{\partial\,i_{M,T}}{\partial M}+(3-p)\,T\,\frac{\partial\,i_{M,T}}{\partial T}\le3\,i_{M,T}
\end{equation}
and, as a consequence:
\begin{enumerate}
\item[(i)] if $p\le 3$, then $i_{M,T}\le(\tfrac M{M_0})^3\,i_{M,T_0}$ for any $M>M_0>0$ and $T>0$.
\item[(ii)] if $p\ge 3$, then $i_{M,T}\le(\tfrac T{T_0})^{3/(3-p)}\,i_{M,T_0}$ for any $M>0$ and $T>T_0>0$.
\end{enumerate}\end{proposition}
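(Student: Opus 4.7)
The plan is to exploit the scaling invariance specific to $\beta(s)=s^p$: under a joint rescaling combining scalar multiplication with spatial dilation, the kinetic, potential and entropy pieces of $\FT[\rho]$ are each homogeneous, but with \emph{different} homogeneity degrees, which is precisely why the inequality involves a linear combination of $\partial_M$ and $\partial_T$. Concretely, for $\mu>0$ and $\rho\in\A_M$ I would define
\[
\tilde\rho\,:=\,\mu\,U_\mu\,\rho\,U_\mu^*\,,\qquad (U_\lambda f)(x)\,:=\,\lambda^{3/2}\,f(\lambda\,x)\,,
\]
where $U_\mu$ is the unitary dilation on $L^2(\R^3)$. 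Using the decomposition \eqref{decom}, the eigenvalues of $\tilde\rho$ are $\mu\,\lambda_j$ with eigenfunctions $U_\mu\,\psi_j$, so $\tr\tilde\rho=\mu\,M$ and $\tr\beta(\tilde\rho)=\mu^p\,\tr\beta(\rho)$, while a straightforward change of variables yields $\tr(-\Delta\,\tilde\rho)=\mu^3\,\tr(-\Delta\,\rho)$ and $\Q[\tilde\rho]=\mu^3\,\Q[\rho]$. Combining these identities, I would obtain the key algebraic relation
\[
\mathcal F_{\mu^{3-p}T}[\tilde\rho]\,=\,\mu^3\,\FT[\rho]\,.
\]

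Next, since $\rho\mapsto\tilde\rho$ is a bijection from $\A_M$ onto $\A_{\mu M}$ (the inverse being $\sigma\mapsto\mu^{-1}\,U_\mu^*\,\sigma\,U_\mu$), taking the infimum on both sides gives the scaling identity
\[
i_{\mu M,\,\mu^{3-p}T}\,=\,\mu^3\,i_{M,T}\qquad\forall\,\mu>0\,.
\]
Differentiating in $\mu$ at $\mu=1$ then immediately produces \eqref{Inequality:PartialDerivatives}, actually as an equality. The main (and essentially only) obstacle will be to justify this differentiation without assuming a priori regularity of $i_{M,T}$. I expect to get around it by keeping only the one-sided bound $i_{\mu M,\mu^{3-p}T}\le\mu^3\,i_{M,T}$ obtained by inserting a near-minimizer of $i_{M,T}$ into the scaling map: the function $g(\mu):=\mu^3\,i_{M,T}-i_{\mu M,\mu^{3-p}T}$ is then non-negative with $g(1)=0$, so its one-sided derivatives at $\mu=1$, which exist thanks to the concavity of $T\mapsto i_{M,T}$ from Lemma~\ref{concavity}, deliver the required inequality.

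For the consequences I would combine the scaling identity with the monotonicity statements of Proposition~\ref{sub-additivity}(ii). For (i) with $p\le 3$, setting $\mu=M/M_0\ge 1$ rewrites the identity as
\[
i_{M,T}\,=\,(M/M_0)^3\,i_{M_0,\,T(M_0/M)^{3-p}}\,;
\]
since $(M_0/M)^{3-p}\le 1$, the fact that $T\mapsto i_{M_0,T}$ is non-decreasing gives $i_{M_0,T(M_0/M)^{3-p}}\le i_{M_0,T}\le 0$, and multiplying by $(M/M_0)^3>0$ yields the claimed bound. For (ii) with $p\ge 3$, choosing instead $\mu=(T/T_0)^{1/(3-p)}\le 1$ in the scaling identity at reference temperature $T_0$ gives $i_{\mu M,T}=(T/T_0)^{3/(3-p)}\,i_{M,T_0}$; since $M\mapsto i_{M,T}$ is non-increasing and $\mu M\le M$, this yields $i_{M,T}\le i_{\mu M,T}=(T/T_0)^{3/(3-p)}\,i_{M,T_0}$, as asserted. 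Everything beyond the scaling identity is then routine algebra together with the monotonicity already established in Proposition~\ref{sub-additivity}.
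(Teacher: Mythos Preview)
Your proposal is correct and takes essentially the same approach as the paper: the same scaling transformation (the paper writes it as $\rho_\lambda=\lambda^4\sum_j\lambda_j\,|\psi_j(\lambda\cdot)\rangle\langle\psi_j(\lambda\cdot)|$, which is exactly your $\mu\,U_\mu\,\rho\,U_\mu^*$), the same identity $\mathcal F_{\mu^{3-p}T}[\tilde\rho]=\mu^3\,\FT[\rho]$, and the same monotonicity arguments from Proposition~\ref{sub-additivity}(ii) to deduce (i) and (ii). Your bijection remark, which upgrades the paper's inequality $i_{\mu M,\mu^{3-p}T}\le\mu^3\,i_{M,T}$ to an equality, is a mild sharpening, and your discussion of one-sided derivatives is more careful than the paper, which simply asserts the differentiation at $\lambda=1$.
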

\begin{proof} Let $\rho\in\A_M$ and, using the representation~\eqref{decom}, define
\[
\rho_\lambda:=\lambda^4\sum_{j\in\N}\lambda_j\,|\psi_j(\lambda\cdot)\rangle\langle \psi_j(\lambda\cdot)|.
\]
With $M[\rho]:=\tr\rho=\int_{\R^3}n_\rho\,\D x$, we find that
\[
M[\rho_\lambda]=\lambda\,M[\rho]=\lambda\,M
\]
and
\[
\mathcal F_{\lambda^{3-p}\,T}[\rho_\lambda]=\lambda^3\,\FT[\rho]\;.
\]
As a consequence, we have
\[
i_{\lambda M,\lambda^{3-p}\,T}\le\lambda^3\,i_{M,T}\;,
\]
which proves \eqref{Inequality:PartialDerivatives} by differentiating at $\lambda=1$. In case (i), since $T\mapsto i_{M,T}$ is non-decreasing, we have
\[
i_{\lambda M_0,T}\le i_{\lambda M_0,\lambda^{3-p}\,T}\le\lambda^3\,i_{M_0,T}\quad\forall\;\lambda>1
\]
and the conclusion holds with $\lambda=M/M_0$. In case (ii), since $M\mapsto i_{M,T}$ is non-increasing, we have
\[
i_{M,\lambda^{3-p}\,T_0}\le i_{\lambda M,\lambda^{3-p}\,T_0}\le\lambda^3\,i_{M,T_0}\quad\forall\;\lambda\in(0,1)
\]
and the conclusion holds with $\lambda=(T/T_0)^{1/(3-p)}$.\end{proof}

Assume that $\beta(s)=s^p$ for any $s\in\R^+$. We observe that for $T<T^*(M)$, $\frac{\partial\,i_{M,T}}{\partial M}\le\frac 3M\,i_{M,T}$ if $p\le 3$, but we have no such estimate if $p>3$. In Proposition~\ref{sub-additivity}~(iii), the sufficient condition for showing that $T^*(M)=\infty$ is precisely $p>3$. Hence, at this stage, we do not have an example of a function $\beta$ satisfying Assumptions ($\beta$1) and ($\beta$2) for which existence of a minimizer of $i_{M,T}$ in $\A_M$ is granted for any $M>0$ and any $T>0$. In other words, with $T^*$ can be infinite for a well chosen function $\beta$, for instance $\beta(s)=s^p$, $s\in\R^+$, for $p>3$. However, in such a case we do not know if the Lagrange multiplier $\mu(T)$ is negative for any $T>0$ and as a consequence, the existence of a minimizer corresponding to $i_{M,T}$ is an open question for large values of $T$.

\appendix\section{Proof of Proposition \ref{decayestimate}}

Consider the minimizer $\rho$ of Proposition~\ref{decayestimate} and let $\mu<0$ be the Lagrange multiplier corresponding to the mass constraint $\tr \rho = M$. Define
\[
\mathcal G^\mu_T[\rho] := \FT[\rho] - \mu\,\tr (\rho)\;.
\]
The density operator $\rho$ is a minimizer of the unconstrained minimization problem $\inf_{\rho\in\A} \mathcal G_T^\mu[\rho]$. By the same argument as in the proof of Proposition \ref{E-L} we know that $\rho$ also solves the linearized minimization problem $\inf_{\sigma\in\A}\mathcal L^\mu[\sigma]$ where
\[
\mathcal L^\mu[\sigma]:= \tr\left[\left(H_\rho - \mu+T\,\beta'(\rho)\right)\sigma\right]\,.
\]

Consider the cut-off functions $\chi_R$ and $\xi_R$ defined in~\eqref{cutoff} and let $\rho_R:=\chi_R\,\rho\,\chi_R$. By Lemma~\ref{Lem:Truncation}, we know that, as $R\to\infty$,
\[
\tr(-\Delta\,\rho)\geq \tr(-\Delta\,\rho_R)+ \tr(-\Delta\,(\xi_R\,\rho\,\xi_R))-\frac{C}{R^2}
\]
for some positive constant $C$. Next we rewrite the potential energy as
\begin{multline*}
\Q[\rho]=\iint_{\R^3\times\R^3}\frac{n_\rho(x)\,\chi_R^2(y)\,n_\rho(y)}{|x-y|}\;\D x\,\D y +\iint_{\R^3\times\R^3}\frac{\chi_{R/4}^2(x)\,n_\rho(x)\,\xi_R^2(y)\,n_\rho(y)}{|x-y|}\;\D x\,\D y \\
+\iint_{\R^3\times\R^3}\frac{\xi_{R/4}^2\,(x)\,n_\rho(x)\,\xi_R^2(y)\,n_\rho(y)}{|x-y|}\;\D x\,\D y\;.
\end{multline*}
In the second integral we use the fact that $|x-y|\geq R/2$, whereas the third integral can be estimated by Lemma~\ref{Lem:UnifPot}. Using the fact that
\begin{align*}
\varepsilon(R):=& \, \tr(-\Delta\,(\xi_R\,\rho\,\xi_R))\\ 
& \, =\!\sum_{j\in\N}\lambda_j\!\!\int_{\R^3}|\nabla(\xi_R\,\psi_j)|^2\,\D x 
\le 2\,\frac M{R^2}\,\norm{\nabla\xi}_{L^\infty}^2+2\sum_{j\in\N}\lambda_j\!\!\int_{\R^3}\xi_R^2\,|\nabla\psi_j|^2\,\D x
\end{align*}
converges to $0$ as $R\to\infty$, we obtain that $\norm{\xi_{R/4}^2\,n_\rho\ast|\cdot|^{-1}}_{L^\infty}\le C\,\sqrt{\varepsilon(R/4)}\to 0$ and can estimate the third integral by
\[
\iint_{\R^3\times\R^3}\frac{\xi_{R/4}^2\,(x)\,n_\rho(x)\,\xi_R^2(y)\,n_\rho(y)}{|x-y|}\;\D x\,\D y\le C\,\sqrt{\varepsilon(R/4)}\int_{\R^3}\xi_R^2(y)\,n_\rho(y)\;\D x\;.
\]

In summary this yields
\[
\Q[\rho]\leq\tr(V_\rho\,\rho_R)+ o(1)\int_{\R^3}\xi_R^2\,n_\rho\;\D x\;.
\]

Collecting all estimates, we have proved that

\[
\mathcal L^\mu[\rho_R]\leq \mathcal L^\mu[\rho] - \varepsilon(R) + \left(\mu+o(1)\right)\int_{\R^3}\xi_R^2\,n_\rho\;\D x+\frac{C}{R^2}
\]
as $R\to\infty$. Recall that $\varepsilon(R)$ is non-negative, $\mu$ is negative (by Lemma \ref{pLessThan3}) and $\rho$ is a minimizer of $\mathcal L^\mu$ so that $\mathcal L^\mu[\rho]\le\mathcal L^\mu[\rho_R]$. As a consequence,
\[
\left(\mu+o(1)\right)\int_{\R^3}\xi_R^2\,n_\rho\;\D x+\frac{C}{R^2}\ge 0
\]
for $R$ large enough, which completes the proof of Proposition \ref{decayestimate}.\qed

\bigskip\noindent\emph{Acknowledgments.\/} {\small The authors thank P.~Markowich and G.~Rein for helpful discussions.}

\bibliographystyle{amsplain}

\medskip

{\small \copyright\,2010 by the authors. This paper may be reproduced, in its entirety, for non-commercial purposes.}

\end{document}